\colorlet{RED}{red}
\renewcommand{\@pnumwidth}{1.5em}
\def\l@section{\@tocline{1}{0.6em}{0em}{}{}}         
\def\l@subsection{\@tocline{2}{0.3em}{2em}{}{}}      
\def\l@subsubsection{\@tocline{3}{0.2em}{4.7em}{}{}}   
\newtheorem{theorem}{Theorem}[section]
\newtheorem{proposition}[theorem]{Proposition}
\newtheorem{lemma}[theorem]{Lemma}
\newtheorem{definition}[theorem]{Definition}
\newtheorem{corollary}[theorem]{Corollary}
\newtheorem{example}[theorem]{Example}
\newtheorem{remark}[theorem]{Remark}
\newtheorem{cl}{Claim}
 \numberwithin{equation}{section}
  \newcommand{\dvg}{\textnormal{d}v_g}
 \newcommand{\dx}{\textnormal{d}x}
 \newcommand{\ds}{\textnormal{d}\sigma}
  \newcommand{\dsg}{\textnormal{d}\sigma _g}
 \newcommand{\gij}{\textnormal{a}  \textnormal{b}}
 \newcommand{\iwe}{ \tau}
 \newcommand{\ops}{\mathcal{Q}}
 \newcommand{\xv}{2_{\mathtt{V}}}
 \newcommand{\xw}{2_{\mathtt{W}}}
 \newcommand{\ck}{\textnormal K}
 \newcommand{\fc}{\mathsf c}
\newcommand{\omb}{\Omega}
\newcommand{\womd}{W^{1,2} (\Omega ; \mathtt{V} _0 , \mathtt{V} _1)}
 \newcommand{\spt}{\operatorname{supp}}
\newcommand{\wghtv}{\mathtt{V}}
\newcommand{\wghtw}{\mathtt{W}}
\newcommand{\dwv}{\textnormal{d} \mathtt{V}}
\newcommand{\dww}{\textnormal{d} \mathtt{W}}
\newcommand{\loc}{\operatorname{loc}}
\newcommand{\kp}{\mathscr{R}}
\newcommand{\rd}{\textnormal{R}}
\newcommand{\mec}{\delta_{\mathbb{R}^n}}
\newcommand{\pvr}{p}
\newcommand{\qxp}{q}
\newcommand{\dy}{\textnormal{d}y}
\newcommand{\cen}{\mathtt{C}}
\newcommand{\bnorm}{\boldsymbol{\|}}
\newcommand{\R}{\mathbb{R}}
\newcommand{\N}{\mathbb{N}}
\newcommand{\qxpz}{q_0}
\newcommand{\qxpo}{q_1}
\newcommand{\Bint}{\mathcal{B}_{\qxp,\qxpz}^{m}}
\newcommand{\Bbd}{\mathcal{B}_{\Gamma,\qxp,\qxpo}^{m}}
\title{{Weighted Sobolev Spaces and an Elliptic Eigenvalue Problem
with an Indefinite Weight on Domains in Noncompact Riemannian Manifolds}}
\author{Juan Pablo Alcon Apaza}
\address{Juan Pablo Alcon Apaza, Departamento de Matem\'atica, Universidade Federal de Minas Gerais, 31270-901, Belo Horizonte - MG,  Brazil}
\email{juanpabloalconapaza@gmail.com}
\begin{document}
\maketitle
\begin{abstract} The aim of this work is to establish sufficient conditions ensuring
the continuity and compactness of the weighted Sobolev embedding and trace operators $$ W^{1,\qxp}(\Omega; \wghtv_0, \wghtv_1) \rightarrow L^{\qxp_0}(\Omega; \wghtv_2) \quad \text { and } \quad W^{1,\qxp}(\Omega; \wghtv_0, \wghtv_1) \rightarrow L^{\qxp_1}(\partial \Omega; \wghtw).$$ As an application, we study the Neumann eigenvalue problem with an indefinite weight \begin{equation*}\left\{\begin{aligned}-\operatorname{div} ( \wghtv _1 \nabla u )+ \wghtv _0 u   & = \lambda \wghtv _2 \iwe u  & & \text { in } \Omega,\\\wghtv _1 \frac{\partial u}{\partial \nu} &= 0 & & \text { on } \partial \Omega .\end{aligned}\right.\end{equation*} where $\Omega$ is an open subset of a noncompact Riemannian manifold, $\lambda$ is a real number, $\iwe$ is a sign-changing function, and $\wghtv _0$, $\wghtv _1$, \(\wghtv_2\), and \(\wghtw\) are weight functions satisfying suitable conditions.  We prove that this problem has infinitely many positive and negative eigenvalues. We also establish boundedness of its weak eigenfunctions and, for $u\in W^{1,2}_0(\overline{\Omega};\wghtv_0,\wghtv_1)$, the decay property  $$\lim_{m\rightarrow\infty}\operatorname*{ess\,sup}_{\Omega\setminus\overline{D_m}}|u|=0,$$ where $(D_m)_{m\in\mathbb N}$ is an exhaustion of the manifold by bounded open sets.\end{abstract}

\let\thefootnote\relax\footnote{2020 \textit{Mathematics Subject Classification}.  {46E35; 35J25; 35P05}}
\let\thefootnote\relax\footnote{\textit{Keywords and phrases}. Weighted Sobolev spaces, compact embeddings, trace operators, indefinite eigenvalue problems, noncompact Riemannian manifolds}


\markright{{WEIGHTED SOBOLEV SPACES AND AN INDEFINITE ELLIPTIC EIGENVALUE PROBLEM}}

\tableofcontents
\section{Introduction}

Weighted Sobolev spaces are widely used as solution spaces for degenerate elliptic equations. Kufner \cite{kugner1985weightedsobo}, Triebel \cite{triebel1995interpolati}, and Schmeisser \& Triebel \cite{schmeisser1987topics}, among others, have contributed to their study on unbounded domains. See, for example, \cite{choquet2008einsteinequations, chua1992extensiontheorems, gruka1991weighteiii} for the Euclidean case, \cite{hamann2013singularmanifolds, pacini2013conicalsing} for noncompact Riemannian domains, and \cite{zbMATH06237307, post2023heatkernel} for unweighted Sobolev spaces on noncompact Riemannian manifolds.

In the study of partial differential equations on a domain \(\Sigma\subset\mathbb R^n\), it is often useful to know that the embedding of \(W^{m,\qxp}(\Sigma)\) into \(L^{\qxp_0}(\Sigma)\) is compact. For nonlinear variational equations, this property is often used to prove that the associated energy functional satisfies the Palais--Smale condition. When \(\Sigma\) is unbounded, compactness generally fails. Sufficient conditions involving weight functions can be found, for example, in \cite{adams1971compactembedding, gruka1991weighteiii, kufner1990hardytype}.

{ 
The elliptic application considered in this paper is the homogeneous indefinite eigenvalue problem
}
\begin{equation}
\left\{
\begin{aligned}
-\operatorname{div} ( \wghtv _1 \nabla u )+ \wghtv _0 u   & = \lambda \wghtv _2 \iwe u & & \text { in } \Omega,\\
\wghtv _1 \frac{\partial u}{\partial \nu} &= 0 & & \text { on } \partial \Omega .
\end{aligned}
\right. \label{61}
\end{equation}
Here, $\Omega$ is an open subset of a Riemannian manifold $M$ of dimension $n\geq2$, $\nu$ is the outward unit normal vector to $\partial\Omega$, $\lambda$ is a real number, $\iwe:M\rightarrow\mathbb R$ is a sign-changing function, and $\wghtv_0$, $\wghtv_1$, and $\wghtv_2$ are weight functions satisfying suitable conditions.

{ 
We also determine conditions under which weak solutions of \eqref{61} are bounded and satisfy
\begin{equation} \label{87}
\lim_{m\rightarrow\infty}\stackbin[\Omega\setminus\overline{D_m}]{}{\operatorname{ess}\sup}\,|u|=0.
\end{equation}
}
Here, $(D_m)_{m\in\mathbb N}$ is an increasing sequence of bounded open subsets of $M$ such that
$$
M=\bigcup_m D_m, \qquad D_m\Subset D_{m+1},
$$
and $D_m\cap\Omega$ has Lipschitz boundary for every $m\in\mathbb N$. We also define
$$
D^m:=M\setminus\overline{D_m}.
$$

\subsection{Context and Related Work}

{On a closed Riemannian manifold, the Laplace--Beltrami operator has discrete spectrum. For an open subset of $\mathbb R^n$, discreteness depends on the chosen realization of the Laplacian and on the boundary condition. In particular, the Neumann Laplacian has discrete spectrum whenever the embedding $W^{1,2}(\mathcal M)\rightarrow L^2(\mathcal M)$ is compact.}
In general, the spectrum of $\Delta_{\mathcal M}$ need not be discrete. Several nonstandard geometric settings have therefore been studied. Conditions for discreteness of the spectrum of the Laplacian on noncompact complete Riemannian manifolds with specific geometric structures appear in \cite{baider1979discretespectra, brooks1984fiitevolume, bruning1989discretespectrum, donnelly1979purepointspectrum, escobar1986spectrum, kleine1988discretenessconditions}.

Cianchi \& Maz'ya \cite{cianchi2011spectrumnoncomriem} prove that, for a noncompact Riemannian manifold $\mathcal M^n$ with $n\geq2$ and $\mathcal H^n(\mathcal M)<\infty$, the embedding $W^{1,2}(\mathcal M)\rightarrow L^2(\mathcal M)$ is compact if and only if
$$
\lim_{s\rightarrow0}\frac{s}{\mu_{\mathcal M}(s)}=0,
$$
which is equivalent to discreteness of the spectrum of $\Delta_{\mathcal M}$. Here, the isocapacitary function $\mu_{\mathcal M}:[0,\mathcal H^n(\mathcal M)/2]\rightarrow[0,\infty]$ is defined by
\begin{align*}
\mu_{\mathcal M}(s):=\inf\Big\{C(E,G)\:|\: E \text { and } G & \text { are measurable subsets of } \mathcal M,\\
& E\subset G\subset\mathcal M,\quad s\leq\mathcal H^n(E),\quad \mathcal H^n(G)\leq\frac12\mathcal H^n(\mathcal M)\Big\},
\end{align*}
where
\begin{align*}
  C(E, G):=\inf \Bigg\{  \int_{\mathcal M}|\nabla u|^2 \, \dx \:|\: u \in W^{1,2}(M), & u \geq 1  \text  { in } E \text  {  and }  u \leq 0\\
  & \text { in }  \mathcal M \setminus G \text  { (up to a set of standard capacity zero) }\Bigg\}.
  \end{align*}

Furthermore, Cianchi \& Maz'ya \cite{cianchi2013boundseigenfunctions} prove that, if $\mathcal H^n(\mathcal M)<\infty$ and
$$
\int_0\frac{\textnormal{d}s}{\mu_{\mathcal M}(s)}<\infty,
$$
then, for every eigenvalue $\gamma$ of $\Delta_{\mathcal M}$, there exists a constant $C=C(\mu_{\mathcal M},\gamma)$ such that
$$
\|u\|_{L^\infty(\mathcal M)}\leq C\|u\|_{L^2(\mathcal M)}
$$
for every eigenfunction $u$ associated with $\gamma$.

Skrzypczak \& Tintarev \cite{zbMATH07331136} study compact subsets of Sobolev spaces on complete, noncompact, connected Riemannian manifolds. They define orbital discretizations and functions that are quasisymmetric relative to an orbital discretization, and use the spotlight lemma to prove that a bounded set
\[
K\subset H^{1,p}(M)\cap S_{\Gamma,i,\lambda}(M)
\]
is relatively compact in \(L^q(M)\). For a compact, connected group \(G\) of isometries, they prove that if \(G\) is coercive, then \(H_G^{1,p}(M)\) is compactly embedded into \(L^q(M)\). Conversely, compactness implies that \(G\) is coercive provided that the injectivity radius of \(M\) is positive.



{ 
Let $\mathcal M_0$ be a complete Riemannian manifold, and let $\mathsf m\geq0$ be a locally integrable potential. Ouhabaz \cite{ouhabaz2001spectral} studies the positivity of the $L^2$ spectral lower bound of the Schrödinger operator $-\Delta_{\mathcal M_0}+\mathsf m$.
}
He proves that, if $\mathcal M_0$ satisfies local $L^2$-Poincaré inequalities, namely,
$$
\int_{B(\pvr,r)}\left|u-\overline{u_{B(\pvr,r)}}\right|^2\,\dvg
\leq C(R)r^2\int_{B(\pvr,r)}|\nabla u|^2\,\dvg
$$
for every $u\in C^\infty(B(\pvr,r))$, $\pvr\in\mathcal M_0$, and $0<r\leq R$, and if $\mathcal M_0$ satisfies a local doubling property, then
$$
\inf\sigma(-\Delta_{\mathcal M_0}+\mathsf m)>0,
$$
provided that
$$
\inf_{\pvr\in\mathcal M_0}\frac{1}{|B(\pvr,r)|}\int_{B(\pvr,r)}\mathsf m\,\dvg>0
$$
for some $r>0$. Ouhabaz also shows that this mean condition is necessary under additional geometric assumptions on $\mathcal M_0$.

Berestycki \& Rossi \cite{zbMATH06441399} study three definitions of the generalized principal eigenvalue for linear second-order elliptic operators on unbounded domains. They show that several classical properties of the Dirichlet principal eigenvalue may fail in this setting. They also relate these generalized eigenvalues to the maximum principle and to the existence of positive eigenfunctions satisfying Dirichlet boundary conditions. Since the Dirichlet resolvent is not compact in general, they do not rely on the Krein--Rutman theorem. Instead, they use positive sub- and supersolutions, exhaustion by bounded domains, comparison principles, Harnack inequalities, and local elliptic estimates.

If $\Sigma_0$ is a bounded domain in $\mathbb R^n$ with smooth boundary, the eigenvalue problem
$$
\left\{
\begin{aligned}
-\Delta u &=\lambda u & & \text { in } \Sigma_0,\\
u &=0 & & \text { on } \partial\Sigma_0,
\end{aligned}
\right.
$$
has an infinite sequence of positive eigenvalues
$$
0<\lambda_1<\lambda_2\leq\cdots\leq\lambda_k\leq\cdots,\qquad \lambda_k\rightarrow\infty \quad\text { as } k\rightarrow\infty,
$$
each of finite multiplicity. The same properties hold for
$$
\left\{
\begin{aligned}
-\Delta u+a_0(x)u &=\lambda m(x)u & & \text { in } \Sigma_0,\\
u &=0 & & \text { on } \partial\Sigma_0,
\end{aligned}
\right.
$$
when $a_0$ and $m$ are positive and sufficiently regular on $\overline{\Sigma_0}$. For a detailed study of principal eigenvalues for second-order differential operators that are not necessarily in divergence form, see Fleckinger, Hernández, \& Thélin \cite{fleckinger2004existence}.

The existence of principal eigenvalues with positive eigenfunctions is important in nonlinear problems for which positive solutions are sought. This occurs, for example, in reaction--diffusion systems arising in population dynamics, chemical reactions, and combustion; see \cite{smoller2012shock}.

The classical reference for this theory is the book by Courant \& Hilbert \cite{courant1962methods}. The main tool is the spectral theory of compact self-adjoint operators on Hilbert spaces. { The variational characterization yields monotonicity with respect to the coefficients and, under domain inclusion, with respect to the domain. Continuity with respect to coefficients or domains requires an appropriate notion of convergence.}

{ 
The theory also extends to unbounded coefficients and indefinite weights. Assume that $a_0>0$ in $\Sigma_0$, $a_0,m\in L^r(\Sigma_0)$ for some $r>n/2$, and that the positive part of $m$ is nontrivial. Then there exists a unique positive principal eigenvalue $\lambda_1^+>0$ with a positive eigenfunction. If the negative part of $m$ is nontrivial, there is also a unique negative principal eigenvalue $\lambda_1^-<0$ with a positive eigenfunction; see \cite{figuereido1982semilinearelliptic, brown1980existence, weinberger1974variational}.
}



\subsection{Main Results} In this work, we extend the results of Pflüger \cite{pfluger1998compactraces} to noncompact Riemannian manifolds. Under suitable conditions on the weights and on the geometry of \(\Omega\), we establish continuous and compact embeddings and traces for \(W^{1,\qxp}(\Omega;\wghtv_0,\wghtv_1)\); see conditions \ref{5}--\ref{3} below. For other geometric and weighted settings, see, for example, \cite{hamann2013singularmanifolds}, where Sobolev--Slobodeckii and Bessel potential spaces on singular manifolds are studied.

We also obtain { two sequences of eigenvalues} for problem \eqref{61}, following the method of Allegretto \cite{allegreto1992eigenvalueindefiniteweight}. The main difficulty is to prove the decay property \eqref{87} on the unbounded domain \(\Omega\). For Fredholm properties of elliptic operators on noncompact manifolds, see Lockhart \& McOwen \cite{lockartowen1985ellipticoperators} and Lockhart \cite{lockhart1980fredholm}.



In the sequel, we write \(\Gamma=\partial\Omega\). We assume that there exists {a locally finite family of open sets \(U_{k,i},\hat U_{k,i}\subset M\), \((k,i)\in\{0,1\}\times\mathbb N\), covering a neighborhood of \(\overline\Omega\)}, with the following properties:

\begin{enumerate}[label=($U_{\arabic*}$)]
\item \label{5} The family satisfies
\[
\overline{\Omega}
\subset
\left(\bigcup_i U_{0,i}\right)\cup\left(\bigcup_j U_{1,j}\right),
\quad
\bigcup_i\hat U_{0,i}\subset\Omega,
\quad
\Gamma\subset\bigcup_j U_{1,j}, \quad  M= \bigcup _{(k,i)\in \{0,1\} \times \mathbb{N}} \hat U _{k,i},
\]
and \(U_{k,i}\subset\hat U_{k,i}\) for every \((k,i)\in\{0,1\}\times\mathbb N\). 

\item \label{2} For every \((k,i)\in\{0,1\}\times\mathbb N\), there exist \(0<r_{k,i}<\hat r_{k,i}\) and a chart
\[
\psi_{k,i}:B(0,\hat r_{k,i})\subset\mathbb R^n\longrightarrow\hat U_{k,i}\subset M
\]
such that:
\begin{enumerate}[label=($\alph*$)]
   \item The restriction \(\psi_{k,i}:B(0,r_{k,i})\rightarrow U_{k,i}\) is a diffeomorphism.

   \item For every \(j\in\mathbb N\),
   \[
   { 
   \psi_{1,j}\left(B(0,\hat r_{1,j})\cap\{x_n>0\}\right)
   =\hat U_{1,j}\cap\Omega,
   \qquad
   \psi_{1,j}\left(B(0,\hat r_{1,j})\cap\{x_n=0\}\right)
   =\hat U_{1,j}\cap\Gamma.
   }
   \]

   \item For every \(j\in\mathbb N\), the map
   \[
   \psi_{\Gamma,1,j}:B(0,\hat r_{1,j})\cap\{x_n=0\}\longrightarrow\hat U_{1,j}\cap\Gamma,
   \]
   defined by
   \[
   \psi_{\Gamma,1,j}(x_1,\ldots,x_{n-1})
   =\psi_{1,j}(x_1,\ldots,x_{n-1},0),
   \]
   is a chart for \(\Gamma\).
\end{enumerate}

\item \label{1} There exists a constant \(\rd_1>0\) such that
\[
\sum_{k,i}\chi_{\hat U_{k,i}}\leq\rd_1
\quad\text{in }M.
\]

\item \label{3} There exists a constant \(\rd_2>0\) such that, for every \(j\in\mathbb N\),
\[
\left(
\sup_{\substack{x\in B(0,\hat r_{1,j})\\ |z|_{\delta_{\mathbb R^n}}=1}}
\left|d(\psi_{1,j})_x z\right|_g
\right)
\left(
\sup_{\substack{(\pvr,v)\in T\hat U_{1,j}\\ |v|_g=1}}
\left|d(\psi_{1,j}^{-1})_{\pvr}v\right|_{\delta_{\mathbb R^n}}
\right)
\leq\rd_2
\]
and
\[
\left(\sup_{\hat U_{1,j}}\det[g_{\gij}]\right)
\left(\sup_{\hat U_{1,j}}\det[g^{\gij}]\right)
\leq\rd_2^2.
\]
Here, \(\delta_{\mathbb R^n}\) is the Euclidean metric,
\([g^{\gij}]:=[g_{\gij}]^{-1}\), and
\(g_{\gij}=g(d\psi_{1,j}e_{\textnormal a},d\psi_{1,j}e_{\textnormal b})\).
\end{enumerate}

\begin{remark} $ $
\begin{enumerate}[label=(\roman*)] 

\item Examples satisfying conditions \ref{5}--\ref{3} can be constructed in the hyperbolic ball \(\mathbb B^n\). In Euclidean space, examples include cylinders and the half-space.

\item Conditions \ref{5}--\ref{3} extend to Riemannian manifolds the conditions U1--U4 in \cite{pfluger1998compactraces}; see also \cite{zbMATH01109876}. These conditions control the geometry of \(\partial\Omega\) at infinity.

\item \label{109} In conditions \ref{5}--\ref{3}, the balls \(B(0,r_{1,j})\) and \(B(0,\hat r_{1,j})\) may be replaced by the cubes
\[
Q(0,r_{1,j})
=\{x\in\mathbb R^n\mid\|x\|_{\max}<r_{1,j}\}
\]
and
\[
Q(0,\hat r_{1,j})
=\{x\in\mathbb R^n\mid\|x\|_{\max}<\hat r_{1,j}\},
\]
without changing the results.

\item If \((M,\Omega)\) has {\it bounded geometry}; see \cite[Definition 4.1]{zbMATH06237307} and \cite[Proposition 3.2]{zbMATH01606240}, then conditions \ref{5}--\ref{3} hold for a suitable family of {\it geodesic normal coordinates} and {\it Fermi coordinates}.

\end{enumerate}
\end{remark}



Let \(\wghtv_i\), \(i=0,\ldots,3\), and \(\wghtw\) be weights on \(M\); that is, they are locally integrable and positive almost everywhere. We also assume that \(\wghtw\) is continuous and that all the weights are bounded above and below by positive constants on every compact subset of \(M\).

We assume that there exist positive, continuous functions $b_i:M\rightarrow\mathbb R$, $i=1,2,3$, and constants $\ck_0,\ck_{\qxp}>0$ such that the following conditions hold.

If $(k,i)\in\{0,1\}\times\mathbb N$ and $j\in\mathbb N$, then
\begin{enumerate}[label=($\mathtt{W}_{\arabic*}$)]

\setcounter{enumi}{-1}

\item \label{138}
For every $x\in B(0,\hat r_{1,j})\cap\{x_n<0\}$ and every $\ell\in\{0,1\}$,
$$
\wghtv_\ell(\psi_{1,j}(x))
\leq
\ck_0\wghtv_\ell(\psi_{1,j}(x_1,\ldots,x_{n-1},-x_n)).
$$

\item \label{45}
For almost every $\pvr\in\hat U_{k,i}$,
$$
\kp_{k,i}^{\qxp}(\pvr)\wghtv_1(\pvr)
\leq
\ck_{\qxp}\wghtv_0(\pvr).
$$

\item \label{139} For almost every $\pvr\in\hat U_{k,i}$,
$$
\hat r_{k,i}^{-\qxp}\bnorm d\psi_{k,i}\bnorm^{-\qxp}\wghtv_1(\pvr)
\leq
\ck_{\qxp}\wghtv_0(\pvr).
$$

\item \label{46}
For almost every $\pvr\in\hat U_{k,i}$,
$$
\wghtv_2(\pvr)
\leq
b_2(\psi_{k,i}(0))
\quad\text{and}\quad
\bnorm d\psi_{k,i}\bnorm^{\qxp} b_1(\psi_{k,i}(0))
\leq
\wghtv_1(\pvr).
$$

\item \label{16}
For almost every $\pvr\in\hat U_{1,j}$,
$$
\wghtw(\pvr)
\leq
b_3(\psi_{1,j}(0))
\quad\text{and}\quad
{ \bnorm d\psi_{1,j}\bnorm^{\qxp}} b_1(\psi_{1,j}(0))
\leq
\wghtv_1(\pvr).
$$
\end{enumerate}


Here,
\begin{align*}
\kp_{k,i}
&\displaystyle:=
\sum_{(\textnormal a,\textnormal b)\in\{0,1\}\times\mathbb N}
\bnorm d\psi_{\textnormal a,\textnormal b}^{-1}\bnorm
(\hat r_{\textnormal a,\textnormal b}-r_{\textnormal a,\textnormal b})^{-1}
\chi_{\hat U_{\textnormal a,\textnormal b}\cap\hat U_{k,i}},\\[5pt]
\bnorm d\psi_{k,i}\bnorm
&\displaystyle:=
\sup\left\{
|d(\psi_{k,i})_x z|_g
\mid
x\in B(0,\hat r_{k,i}),\ |z|_{\delta_{\mathbb R^n}}=1
\right\},\\[5pt]
\bnorm d\psi_{k,i}^{-1}\bnorm
&\displaystyle:=
\sup\left\{
|d(\psi_{k,i}^{-1})_{\pvr}v|_{\delta_{\mathbb R^n}}
\mid
(\pvr,v)\in T\hat U_{k,i},\ |v|_g=1
\right\}.
\end{align*}

Next, set
\begin{equation*}
\mathcal B_{\qxp,\qxp_0}^m
:=
\sup_{\substack{(k,i)\in\{0,1\}\times\mathbb N\\ \psi_{k,i}(0)\in D^m}}
\frac{b_2^{1/\qxp_0}(\psi_{k,i}(0))}{b_1^{1/\qxp}(\psi_{k,i}(0))}
\bnorm G_{k,i}\bnorm^{1/\qxp_0}
\bnorm G_{k,i}^{-1}\bnorm^{1/\qxp}
\hat r_{k,i}^{\frac n{\qxp_0}-\frac n{\qxp}+1},
\end{equation*}
where
\[
\bnorm G_{k,i}\bnorm
:=\sup_{\hat U_{k,i}}\sqrt{\det[g_{\gij}]}
\quad\text{and}\quad
\bnorm G_{k,i}^{-1}\bnorm
:=\sup_{\hat U_{k,i}}\sqrt{\det[g^{\gij}]}.
\]

Furthermore, define
\begin{equation*}
\mathcal B_{\Gamma,\qxp,\qxp_1}^m
:=
\sup_{\substack{j\in\mathbb N\\ \psi_{1,j}(0)\in\Gamma^m}}
\frac{b_3^{1/\qxp_1}(\psi_{1,j}(0))}{b_1^{1/\qxp}(\psi_{1,j}(0))}
\bnorm G_{\Gamma,1,j}\bnorm^{1/\qxp_1}
\bnorm G_{1,j}^{-1}\bnorm^{1/\qxp}
\hat r_{1,j}^{\frac{n-1}{\qxp_1}-\frac n{\qxp}+1},
\end{equation*}
where
\[
{ 
\bnorm G_{\Gamma,1,j}\bnorm
:=
\sup_{\hat U_{1,j}\cap\Gamma}\sqrt{\det[g_{\Gamma,\gij}]}
}
\quad\text{and}\quad
g_{\Gamma,\gij}
:=g(d\psi_{\Gamma,1,j}e_{\textnormal a},d\psi_{\Gamma,1,j}e_{\textnormal b}).
\]

\begin{remark} $ $
\begin{enumerate}[label=(\roman*)] 

\item The quantities \(\mathcal B_{q,q_0}^m\) and \(\mathcal B_{\Gamma,q,q_1}^m\)  extend to Riemannian manifolds the quantities \(\mathcal B_{n,k}\) in \cite{pfluger1998compactraces} and \(\mathscr A_n\) in \cite{gruka1991weighteiii}, which are defined in the Euclidean setting.


\item If \((M,\Omega)\) has {\it bounded geometry}; see \cite[Definition 4.1]{zbMATH06237307} and \cite[Proposition 3.2]{zbMATH01606240}, then one may choose geodesic normal and Fermi coordinates such that { there exists \(c\geq1\) for which each of
\[
\kp_{k,i},\quad
\bnorm G_{k,i}\bnorm,\quad
\bnorm G_{k,i}^{-1}\bnorm,\quad
\bnorm G_{\Gamma,1,j}\bnorm,\quad
\bnorm d\psi_{k,i}\bnorm,\quad
\bnorm d\psi_{k,i}^{-1}\bnorm,\quad
\hat r_{k,i},\quad
\hat r_{k,i}-r_{k,i}
\]
is bounded above by \(c\) and below by \(c^{-1}\)}.
\end{enumerate}
\end{remark}

\begin{example}\label{ex:shrinking-components-eta}
Let $\eta>0$. Let $\ell_0 > 4^{1/\eta}$. For every
$\ell\in\N$ with $\ell\geq\ell_0$, set
\[
a_\ell:=(\ell,0),
\qquad
R_\ell:=\left( \frac1\ell \right)^\eta,
\qquad
\rho_\ell:=\sqrt{R_\ell}.
\]
Consider the open set
\[
{ 
\Omega
:=
\bigcup_{\substack{\ell\in\N\\ \ell\geq\ell_0}}B(a_\ell,R_\ell)
\subset\R^2.
}
\]
Endow \(\mathbb R^2\) with the Euclidean metric \(g=\delta_{\R^2}\). { For this example, take \(D_m=B(0,m)\).} 

Define
\[
r_{0,\ell}:=\left(1-\frac{R_\ell}{4}\right)\rho_\ell,
\qquad
\hat r_{0,\ell}:=\left(1-\frac{R_\ell}{8}\right)\rho_\ell,
\]
and
\[
r_{1,\ell}:=\frac{3\rho_\ell}{4},
\qquad
\hat r_{1,\ell}:=\frac{7\rho_\ell}{8}.
\]

The interior chart is
\[
\psi_{0,\ell}(x,y):=a_\ell+\rho_\ell(x,y),
\qquad
(x,y)\in B(0,\hat r_{0,\ell}).
\]

For the boundary charts, put
\[
S_\ell(x):=\sqrt{R_\ell^2-\rho_\ell^2x^2}.
\]
Define
\[
\psi_{1,\ell}^{\pm }(x,y)
:=
a_\ell+
\left(1-\frac{y}{\rho_\ell}\right)
\left(
\rho_\ell x,
\pm S_\ell(x)
\right),
\qquad
(x,y)\in B(0,\hat r_{1,\ell}),
\]
and
\[
\varphi_{1,\ell}^{\pm}(x,y)
:=
a_\ell+
\left(1-\frac{y}{\rho_\ell}\right)
\left(
\pm S_\ell(x),
\rho_\ell x
\right),
\qquad
(x,y)\in B(0,\hat r_{1,\ell}).
\]

After relabeling the charts
\[
\psi_{0,\ell},
\qquad
\psi_{1,\ell}^{+},
\quad
\psi_{1,\ell}^{-},
\quad
\varphi_{1,\ell}^{+},
\quad
\varphi_{1,\ell}^{-},
\qquad
\ell\in\N,\ \ell\geq\ell_0,
\]
as a family $\psi_{k,i}$, the corresponding covering satisfies conditions
\ref{5}--\ref{3}. Moreover, there exists a constant $C>0$, independent of
$\ell$, such that, for every $\ell\in\N$ with $\ell\geq\ell_0$,
\begin{gather*}
\bnorm d\psi_{0,\ell}\bnorm, \
\bnorm d\psi_{1,\ell}^{\pm}\bnorm,  \
\bnorm d\varphi_{1,\ell}^{\pm}\bnorm
\leq
C\rho_\ell,
\label{eq:dpsi-shrinking-components-eta}
\\[3pt]
\bnorm d(\psi_{0,\ell})^{-1}\bnorm, \
\bnorm d(\psi_{1,\ell}^{\pm})^{-1}\bnorm, \
\bnorm d(\varphi_{1,\ell}^{\pm})^{-1}\bnorm
\leq
\frac{C}{\rho_\ell},
\label{eq:inverse-dpsi-shrinking-components-eta}
\\[3pt]
\bnorm G_{\psi_{0,\ell}}\bnorm,
\bnorm G_{\psi_{1,\ell}^{\pm}}\bnorm, \
\bnorm G_{\varphi_{1,\ell}^{\pm}}\bnorm
\leq
CR_\ell, 
\qquad
\bnorm G_{\psi_{0,\ell}}^{-1}\bnorm, \
\bnorm G_{\psi_{1,\ell}^{\pm}}^{-1}\bnorm, \
\bnorm G_{\varphi_{1,\ell}^{\pm}}^{-1}\bnorm
\leq
\frac{C}{R_\ell},
\label{eq:G-shrinking-components-eta}
\\[3pt]
\bnorm G_{\Gamma,\psi_{1,\ell}^{\pm}}\bnorm, \
\bnorm G_{\Gamma,\varphi_{1,\ell}^{\pm}}\bnorm
\leq
C\rho_\ell, 
\label{eq:Gamma-G-shrinking-components-eta}\\[3pt]
\kp_{\psi_{0,\ell}}, \
\kp_{\psi_{1,\ell}^{\pm}}, \
\kp_{\varphi_{1,\ell}^{\pm}}
\leq
\frac{C}{R_\ell^2} .
\label{eq:kappa-shrinking-components-eta}
\end{gather*}

Define the weights
\[
\wghtv_0(p)
:=
(1+|p|)^{2\eta\qxp+\alpha_0},
\qquad
\alpha_0\geq0,
\]
\[
\wghtv_1(p):=1,
\]
\[
\wghtv_2(p)
:=
(1+|p|)^{-\alpha_2},
\qquad
\alpha_2\geq0,
\]
and
\[
\wghtw(p)
:=
(1+|p|)^{-\beta},
\qquad
\beta\geq0.
\]

There exist positive continuous functions $b_1$, $b_2$, and $b_3$ such that
\[
b_1(x,y)\approx1,
\qquad
b_2(x,y)\approx(1+|(x,y)|)^{-\alpha_2},
\qquad
b_3(x,y)\approx(1+|(x,y)|)^{-\beta}.
\]

Then the weights satisfy conditions \ref{138}--\ref{16}.

Assume that \(\qxp_0\geq\qxp\geq1\) and
\[
-\frac{\alpha_2}{\qxp_0}
-\frac{2\eta}{\qxp_0}
+\frac{2\eta}{\qxp}
-\frac{\eta}{2}
\leq0.
\]
Then there exist constants \(C_1>0\) and \(m_0\in\N\) such that, for every \(m\in\N\) with \(m\geq m_0\),
\[
\mathcal{B}_{\qxp,\qxp_0}^m
\leq
C_1
m^{
-\frac{\alpha_2}{\qxp_0}
-\frac{2\eta}{\qxp_0}
+\frac{2\eta}{\qxp}
-\frac{\eta}{2}
}.
\]

Similarly, assume that $\qxp_1\geq1$ and
\[
-\frac{\beta}{\qxp_1}
-\frac{\eta}{\qxp_1}
+\frac{2\eta}{\qxp}
-\frac{\eta}{2}
\leq0.
\]
Then there exist constants $C_1>0$ and $m_0\in\N$ such that, for every
$m\in\N$ with $m\geq m_0$,
\[
\mathcal{B}_{\Gamma,\qxp,\qxp_1}^m
\leq
C_1
m^{
-\frac{\beta}{\qxp_1}
-\frac{\eta}{\qxp_1}
+\frac{2\eta}{\qxp}
-\frac{\eta}{2}
}.
\]

\end{example}

\begin{example}\label{ex:polynomial-alpha-horn-domain}
Let \(\alpha>0\) and \(m_0\geq2\). Let \(\omb\subset\R^2\) be a smooth domain, and endow \(\mathbb R^2\) with the Euclidean metric \(g=\delta_{\R^2}\).
Assume that
\[
 \overline{\omb}\cap\{(X,Y)\in\R^{2} \mid X\leq m_{0}\}
\]
is compact and that
\[
 \omb\cap\{X>m_{0}\}
 =
 \left\{
 (X,Y)\in\R^{2} \mid
 X>m_{0},\ |Y|<\frac{1}{X^\alpha}
 \right\}.
\]

Let
\[
 1\leq \qxp,\qxpz,\qxpo<\infty,
 \qquad
 \eta,\beta,\delta,\gamma\in\R.
\]
Consider the weights
\[
 \wghtv_1(p):=(1+|p|)^{1-\beta},
 \qquad
 \wghtv_0(p):=(1+|p|)^{\qxp-\eta},
\]
and
\[
 \wghtv_2(p):=(1+|p|)^\delta,
 \qquad
 \wghtw(p):=(1+|p|)^\gamma.
\]
Assume
\[
 \beta\geq1, \qquad \eta \geq  q,  
 \qquad
 \beta\geq \eta+( \alpha-1)\qxp+1.
\]

Fix
\[
 r\in
 \left(
 \max\left\{\frac12,\frac1{\alpha+1}\right\},
 1
 \right).
\]
Choose \(N\in\N\) such that
\[
N\geq m_0^{\alpha+1}.
\]
{ 
For \(i\in\N\), set
\[
a_i:=(N+i)^{1/(\alpha+1)}.
\]
For this example, take \(D_m=B(0,m)\).
}

Set
\[
r_{0,i}=r_{1,i}:=r,
\qquad
\hat r_{0,i}=\hat r_{1,i}:=1.
\]

For \((s,t)\in Q(0,1)=(-1,1)^2\), define
\[
 X_i(s):=a_i+\frac{s}{a_i^\alpha}.
\]
The interior charts on the tail are
\[
 \varphi_{0,i}(s,t)
 :=
 \left(
 X_i(s),
 \frac{t}{X_i(s)^\alpha}
 \right),
 \qquad i\in\N.
\]
The upper and lower boundary charts on the tail are
\[
 \psi_{1,i}^{\pm}(s,t)
 :=
 \left(
 X_i(s),
 \pm\frac{1-t}{X_i(s)^\alpha}
 \right),
 \qquad i\in\N.
\]

After adjoining finitely many ordinary interior charts and finitely many
ordinary boundary charts on the compact part, and after relabeling the
families
\[
 \varphi_{0,i},
 \qquad
 \psi_{1,i}^{+},
 \qquad
 \psi_{1,i}^{-}
\]
as a family \(\psi_{k,j}\), \((k,j)\in\{0,1\}\times\N\), the resulting
covering satisfies conditions \ref{5}--\ref{3},
with cubes replacing balls as permitted by Remark \ref{109}.

Moreover, there exists a constant \(C>0\), independent of \(i\), such that
\begin{gather*}
 \frac1{Ca_i^\alpha}
 \leq
 \bnorm d\varphi_{0,i}\bnorm, \ \bnorm d\psi_{1,i}^{\pm}\bnorm
 \leq
 \frac{C}{a_i^\alpha}, \qquad
 \frac{a_i^\alpha}{C}
 \leq
 \bnorm d\varphi_{0,i}^{-1}\bnorm, \ \bnorm d(\psi_{1,i}^{\pm})^{-1}\bnorm
 \leq
 Ca_i^\alpha, \\[3pt]
 \frac{1}{Ca_i^{2\alpha}}
 \leq
 \bnorm G_{\varphi_{0,i}}\bnorm , \ \bnorm G_{\psi_{1,i}^{\pm}}\bnorm
 \leq
 \frac{C}{a_i^{2\alpha}},\qquad
 \frac{a_i^{2\alpha}}{C}
 \leq
  \bnorm G_{\varphi_{0,i}}^{-1}\bnorm, \ \bnorm G_{\psi_{1,i}^{\pm}}^{-1}\bnorm
 \leq
 Ca_i^{2\alpha},\\[3pt]
 \frac1{Ca_i^\alpha}
 \leq
 \bnorm G_{\Gamma,\psi_{1,i}^{\pm}}\bnorm
 \leq
 \frac{C}{a_i^\alpha},\\[3pt]
 \kp_{\varphi_{0,i}}
 \leq
 C_r a_i^\alpha,
 \qquad
 \kp_{\psi_{1,i}^{\pm}}
 \leq
 C_r a_i^\alpha.
 \end{gather*}
 
There exist positive continuous functions $b_1$, $b_2$, and $b_3$ such that, on the tail,
\[
b_1(s,t)\approx (1+|(s,t)|)^{1-\beta},
\qquad
b_2(s,t)\approx(1+|(s,t)|)^{\delta},
\qquad
b_3(s,t)\approx(1+|(s,t)|)^{\gamma}.
\]
With this choice, the weights  satisfy conditions \ref{138}--\ref{16}.

If
\[
 \frac{\delta-2\alpha}{\qxpz}
 +
 \frac{\beta+2\alpha-1}{\qxp}
 \leq0,
\]
then there exist constants \(C>0\) and \(m_\ast\geq1\), independent of
\(m\), such that, for every \(m\geq m_\ast\),
\[
 \Bint
 \leq
 C
 m^{
 \frac{\delta-2\alpha}{\qxpz}
 +
 \frac{\beta+2\alpha-1}{\qxp}
 }.
\]
If
\[
 \frac{\gamma-\alpha}{\qxpo}
 +
 \frac{\beta+2\alpha-1}{\qxp}
 \leq0,
\]
then there exist constants \(C>0\) and \(m_\ast\geq1\), independent of
\(m\), such that, for every \(m\geq m_\ast\),
\[
 \Bbd
 \leq
 C
 m^{
 \frac{\gamma-\alpha}{\qxpo}
 +
 \frac{\beta+2\alpha-1}{\qxp}
 }.
\]

\end{example}

\subsubsection{{Analysis of Embeddings}} Our first main results are Propositions \ref{33} and \ref{56}. We write
\[
\Omega_m:=D_m\cap\Omega,
\qquad
\Omega^m:=\Omega\setminus\overline{\Omega_m},
\qquad
\Gamma_m:=D_m\cap\Gamma,
\qquad
\Gamma^m:=\Gamma\setminus\overline{\Gamma_m}.
\]
\begin{proposition} \label{33}
Assume that conditions \ref{5}--\ref{3}, \ref{139},  and \ref{46} hold. Suppose that \(1\leq\qxp<n\) and
$$
\frac{n\qxp }{ n - \qxp } \geq \qxp _0 \geq \qxp.
$$
Then the following statements hold:
\begin{enumerate}[label=(\roman*)]

\item If 
$$
\lim _{m \rightarrow \infty} \mathcal{B} _{ \qxp , \qxp _0 } ^m  <\infty,
$$
then 
$$
W^{1, \qxp  }(\Omega ;  \wghtv _0, \wghtv _1 ) \rightarrow L^{ \qxp _0 }  ( \Omega ; \wghtv _2 )$$ 
is continuous.

\item If  
$$
\qxp _0<\frac{n\qxp}{n-\qxp} \quad \text { and } \quad \lim _{m \rightarrow \infty} \mathcal{B} _{ \qxp , \qxp _0 } ^m =0,
$$ 
then 
$$
W^{1, \qxp  } (\Omega ; \wghtv _0,  \wghtv _1 ) \rightarrow L^{ \qxp _0 }  ( \Omega ; \wghtv _2  )$$
 is compact.
\end{enumerate}
\end{proposition}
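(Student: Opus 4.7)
The plan is to reduce both assertions to a single local estimate on each chart patch. First I would fix a smooth partition of unity $\{\phi_{k,i}\}$ subordinate to $\{U_{k,i}\}$, built from Euclidean cutoffs between radii $r_{k,i}$ and $\hat r_{k,i}$ in each chart and normalised by division by the (locally finite) sum; the finite-overlap hypothesis \ref{1} makes this well defined and gives the pointwise bound $|\nabla \phi_{k,i}|_g \leq C\,\kp_{k,i}$. Writing $u = \sum_{k,i} \phi_{k,i} u$, convexity of $t\mapsto t^{\qxp_0}$ combined with \ref{1} yields
$$\int_\Omega |u|^{\qxp_0} \wghtv_2 \dvg \leq \rd_1^{\qxp_0-1} \sum_{k,i} \int_{\hat U_{k,i}} |\phi_{k,i} u|^{\qxp_0} \wghtv_2 \dvg,$$
reducing the problem to a per-patch estimate whose constant should be the factor $\beta_{k,i}$ inside the supremum defining $\mathcal{B}^m_{\qxp,\qxp_0}$.

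For each patch I would pull $\phi_{k,i} u$ back to the Euclidean (half-)ball $B(0,\hat r_{k,i})$, handling boundary patches $k=1$ by even reflection across $\{x_n=0\}$ (allowed by \ref{2}), and apply the scaled Sobolev inequality for functions compactly supported in the ball,
$$\|v\|_{L^{\qxp_0}(B_{\hat r_{k,i}})} \leq C\bigl(\hat r_{k,i}^{1+n/\qxp_0-n/\qxp}\|\nabla v\|_{L^{\qxp}} + \hat r_{k,i}^{n/\qxp_0-n/\qxp}\|v\|_{L^{\qxp}}\bigr),$$
valid precisely when $\qxp<n$ and $\qxp_0\leq n\qxp/(n-\qxp)$. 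Returning to the manifold, the volume-form change of variables produces the factors $\|G_{k,i}\|^{1/\qxp_0}$ and $\|G^{-1}_{k,i}\|^{1/\qxp}$; the chain rule splits $\nabla(\phi_{k,i} u)$ into $\phi_{k,i}\nabla u$ and $u\,\nabla \phi_{k,i}$; the piece $\phi_{k,i}\nabla u$ feeds into $\int|\nabla u|^{\qxp}\wghtv_1$ via the lower bound $\|d\psi_{k,i}\| b_1 \leq \wghtv_1$ of \ref{46}, while the pieces involving $u\,\nabla\phi_{k,i}$ (bounded by $\kp_{k,i}$) and the zero-order Poincaré term (with prefactor $\hat r_{k,i}^{-\qxp}\|d\psi_{k,i}\|^{-1}$) are both absorbed into $\ck_{\qxp}\int|u|^{\qxp}\wghtv_0$ using the two clauses of \ref{45}. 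The factor $b_2^{1/\qxp_0}/b_1^{1/\qxp}$ emerges from the upper bound $\wghtv_2 \leq b_2(\psi_{k,i}(0))$ in \ref{46}. Assembling everything yields
$$\int_{\hat U_{k,i}} |\phi_{k,i} u|^{\qxp_0} \wghtv_2 \dvg \leq C \beta_{k,i}^{\qxp_0} \Bigl(\int_{\hat U_{k,i}} (|\nabla u|^{\qxp} \wghtv_1 + |u|^{\qxp}\wghtv_0) \dvg\Bigr)^{\qxp_0/\qxp}.$$

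To close (i), I would split the outer sum over $(k,i)$ into the patches with $\psi_{k,i}(0)\in \bar D_{m_*}$—a locally finite, hence finite, collection on which all weights are bounded above and below by positive constants so the standard unweighted Sobolev embedding applies—and those with $\psi_{k,i}(0)\in D^{m_*}$, which are uniformly controlled by $\lim_m \mathcal{B}^m_{\qxp,\qxp_0}<\infty$; resumming uses the finite overlap one more time. For (ii), given a bounded sequence $\{u_j\}\subset W^{1,\qxp}(\Omega;\wghtv_0,\wghtv_1)$, the classical Rellich--Kondrachov theorem on each bounded Lipschitz set $\Omega_m$ yields a subsequence convergent in $L^{\qxp_0}(\Omega_m;\wghtv_2)$, and a Cantor diagonal extraction produces a subsequence converging on every $\Omega_m$; the tail satisfies $\|u_j\|_{L^{\qxp_0}(\Omega^m;\wghtv_2)} \leq C\mathcal{B}^m_{\qxp,\qxp_0}\|u_j\|_{W^{1,\qxp}}$, which tends to $0$ uniformly in $j$ by hypothesis, so the diagonal subsequence is Cauchy in $L^{\qxp_0}(\Omega;\wghtv_2)$. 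I expect the main technical obstacle to be the per-patch estimate itself: tracking the geometric factors $\|d\psi_{k,i}\|$, $\|G_{k,i}\|$, $\|G^{-1}_{k,i}\|$, $\hat r_{k,i}$, and the cutoff-derivative sum $\kp_{k,i}$ so that they assemble into exactly $\beta_{k,i}$, while treating boundary half-ball and interior ball charts uniformly.
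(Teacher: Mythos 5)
Your proposal is correct and follows essentially the same route as the paper: localization to the chart patches with reflection across $\{x_n=0\}$ for boundary charts, a scaled Euclidean Sobolev inequality whose geometric factors assemble into $\mathcal{B}^m_{\qxp,\qxp_0}$ via \ref{45}--\ref{46}, finite overlap \ref{1} to resum, and the ``compact on each $\Omega_m$ plus uniformly small tail'' criterion (the paper's Lemma \ref{21}, which you re-derive via diagonal extraction). The only organizational difference is that you put the cutoff inside the Sobolev inequality (compact-support version), whereas the paper packages the cutoffs and reflection into the extension operator of Proposition \ref{22} and then applies the full $W^{1,\qxp}(B(0,1))\to L^{\qxp_0}(B(0,1))$ embedding on the ball; the two uses of \ref{45} to absorb $\kp_{k,i}$ are parallel.
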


\begin{proposition} \label{56}
Assume that conditions \ref{5}--\ref{3}, \ref{139},  and \ref{16} hold. Suppose that \(1\leq\qxp<n\) and
\[
 \frac{(n-1)\qxp }{n - \qxp}  \geq \qxp _1 \geq \qxp.
\]
Then the following statements hold:
 \begin{enumerate}[label=(\roman*)]
\item If  
$$
\lim _{m \rightarrow \infty} \mathcal{B} _{ \Gamma , \qxp , \qxp _1 } ^m <\infty,
$$
 then there exists a continuous trace operator $$
 W^{1, \qxp  } ( \Omega ; \wghtv_0, \wghtv_1 ) \rightarrow L^{\qxp _1}  (\Gamma ; \wghtw).
 $$

\item  If 
$$
\qxp _1<\frac{(n-1)\qxp}{n-\qxp} \quad \text { and } \quad \lim _{m \rightarrow \infty} \mathcal{B} _{ \Gamma , \qxp , \qxp _1 } ^m =0,
$$
 then the trace operator 
 $$
 W^{1, \qxp  } (\Omega ; \wghtv _0, \wghtv _1 ) \rightarrow L^{\qxp _1 }  (\Gamma ; \wghtw)
 $$
  is compact.
\end{enumerate}
\end{proposition}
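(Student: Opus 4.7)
The plan is to mirror the approach of Proposition \ref{33}, but with the interior Sobolev embedding on balls replaced by a trace inequality on half-balls, and with the sum restricted to boundary charts $\psi_{1,j}$. I would split $\Gamma = \Gamma_m \cup \Gamma^m$: the near part $\Gamma_m \subset \bar D_m$ will be handled by classical Lipschitz-domain trace theory, using that on the precompact set $\bar D_m \cap \Omega$ all weights are uniformly bounded above and away from zero by the standing hypothesis, while the far part $\Gamma^m$ is where the quantity $\mathcal{B}^m_{\Gamma,\qxp,\qxp_1}$ appears and provides the decay needed for both continuity and compactness.

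For each boundary chart $\psi_{1,j}$ with $\psi_{1,j}(0) \in \Gamma^m$, I would pull $u$ back to $v := u \circ \psi_{1,j}$ on the half-ball $B^+ := B(0, \hat r_{1,j}) \cap \{x_n > 0\}$ and apply the scaled Euclidean trace inequality
\begin{equation*}
\left( \int_{B(0,\hat r_{1,j}) \cap \{x_n=0\}} |v|^{\qxp_1} \dx \right)^{1/\qxp_1} \leq \cne\, \hat r_{1,j}^{\frac{n-1}{\qxp_1}-\frac{n}{\qxp}+1} \left( \int_{B^+} |\nabla v|^{\qxp} \dx + \hat r_{1,j}^{-\qxp} \int_{B^+} |v|^{\qxp} \dx \right)^{1/\qxp},
\end{equation*}
valid in the range $\qxp \leq \qxp_1 \leq (n-1)\qxp/(n-\qxp)$ and whose $\hat r_{1,j}$-exponent is exactly the one appearing in the definition of $\mathcal{B}^m_{\Gamma,\qxp,\qxp_1}$. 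Pushing the left-hand integral forward through $\psi_{\Gamma,1,j}$ produces the Riemannian surface-measure factor $\|G_{\Gamma,1,j}\|^{1/\qxp_1}$, and bounding $\wghtw \leq b_3(\psi_{1,j}(0))$ via \ref{16} yields the $b_3$-numerator. On the right-hand side, the bound $|\nabla v|_{\delta_{\mathbb{R}^n}} \leq \|d(\psi_{1,j}^{-1})\|\,|\nabla u|_g$ together with the Jacobian produces $\|G^{-1}_{1,j}\|^{1/\qxp}$; the lower bound $\|d\psi_{1,j}\|\, b_1(\psi_{1,j}(0)) \leq \wghtv_1$ from \ref{16} absorbs the $\|d\psi_{1,j}\|$-factor and introduces $b_1(\psi_{1,j}(0))^{-1/\qxp}$; and the zeroth-order term $\hat r_{1,j}^{-\qxp}\int |v|^{\qxp}\dx$ is absorbed into the $\wghtv_0$-weighted mass via the second inequality of \ref{45}, which is tailored exactly for this purpose. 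Collecting constants yields the chart-by-chart bound $\|u\|_{L^{\qxp_1}(\hat U_{1,j}\cap\Gamma;\wghtw)} \leq \cne\, \mathcal{B}^m_{\Gamma,\qxp,\qxp_1}\, \|u\|_{W^{1,\qxp}(\hat U_{1,j};\wghtv_0,\wghtv_1)}$.

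For part (i), I would raise both sides to the $\qxp_1$-th power, sum over the relevant $j$, apply the elementary inclusion $\|\cdot\|_{\ell^{\qxp_1/\qxp}} \leq \|\cdot\|_{\ell^1}$ (valid since $\qxp_1/\qxp \geq 1$) to replace $\sum_j \|u\|_{W^{1,\qxp}(\hat U_{1,j})}^{\qxp_1}$ by $(\sum_j \|u\|_{W^{1,\qxp}(\hat U_{1,j})}^{\qxp})^{\qxp_1/\qxp}$, and use the bounded-overlap property \ref{1} to majorize the inner sum by $\rd_1 \|u\|_{W^{1,\qxp}(\Omega;\wghtv_0,\wghtv_1)}^{\qxp}$. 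Continuity on $\Gamma_m$ is inherited from the classical trace inequality on the Lipschitz domain $\Omega_m$, with the weight constants harmless because the weights are bounded above and below by positive constants on $\bar D_m$. For part (ii), given a bounded sequence $(u_k)$ in $W^{1,\qxp}(\Omega;\wghtv_0,\wghtv_1)$ and $\varepsilon>0$, I would choose $m$ with $\mathcal{B}^m_{\Gamma,\qxp,\qxp_1}$ so small that (i) forces $\|u_k-u_\ell\|_{L^{\qxp_1}(\Gamma^m;\wghtw)} \leq \varepsilon$ uniformly in $k,\ell$; on $\Gamma_m$, Rellich-type compactness of the trace $W^{1,\qxp}(\Omega_m) \hookrightarrow\hookrightarrow L^{\qxp_1}(\Gamma_m)$ extracts a subsequence convergent in $L^{\qxp_1}(\Gamma_m;\wghtw)$, and a Cantor diagonal extraction over $m_\ell \to \infty$ and $\varepsilon_\ell \to 0$ yields a Cauchy subsequence in $L^{\qxp_1}(\Gamma;\wghtw)$.

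The main obstacle I anticipate is the borderline exponent $\qxp_1 = (n-1)\qxp/(n-\qxp)$, where the classical Rellich-type compactness of the Lipschitz-domain trace fails: at the critical exponent, compactness on $\Gamma_m$ must be recovered by an interpolation argument between continuity at the critical exponent and strict compactness at some slightly sub-critical $\qxp_1' < (n-1)\qxp/(n-\qxp)$, combined with the uniform $L^{\qxp_1}(\Gamma;\wghtw)$-bound supplied by (i). A secondary but non-trivial bookkeeping step is verifying that the second inequality of \ref{45} absorbs the $\hat r_{1,j}^{-\qxp}$ factor uniformly in $j$, producing a constant depending only on $\ck_\qxp$, so that the supremum defining $\mathcal{B}^m_{\Gamma,\qxp,\qxp_1}$ genuinely dominates the summed contributions coming from $\Gamma^m$.
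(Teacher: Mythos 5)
Your proposal follows essentially the same route as the paper: a scaled Euclidean trace inequality applied chart by chart in the boundary charts $\psi_{1,j}$ with $\psi_{1,j}(0)\in\Gamma^m$, the weight conditions \ref{16} and \ref{45} to convert to the weighted norms and absorb the $\hat r_{1,j}^{-\qxp}$ factor, the bounded overlap \ref{3}/\ref{1} to sum over $j$, and the splitting $\Gamma=\Gamma_m\cup\Gamma^m$ plus a diagonal extraction, which is exactly the content of the paper's Lemma \ref{25}. The only cosmetic difference is that you work directly on the half-ball, whereas the paper first applies the extension operator of Proposition \ref{22} (i.e.\ reflection) and then uses the trace inequality on the full ball; the two are interchangeable here.

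One caveat on your ``anticipated obstacle'': interpolating between a compact trace at a subcritical exponent and a merely bounded trace at $\qxp_1=(n-1)\qxp/(n-\qxp)$ yields compactness only at strictly intermediate exponents, not at the endpoint, so that repair as stated does not close the critical case. The paper's proof does not address this either --- it simply invokes compactness of $W^{1,\qxp}(\Omega_m;\wghtv_0,\wghtv_1)\rightarrow L^{\qxp_1}(\Gamma_m;\wghtw)$ as hypothesis \eqref{27} of Lemma \ref{25} on the grounds that the weights are bounded above and below on compact sets --- so your attempt is no weaker than the paper's on this point, but be aware that neither argument actually establishes part (ii) at the borderline exponent without an additional idea.
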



\begin{corollary}\label{57}

Suppose that \(n\geq3\) and that conditions \ref{5}--\ref{3} and \ref{139}--\ref{16} hold for \(\qxp=2\). Assume further that
\[
\lim_{m\rightarrow\infty}\mathcal B_{2,\xv}^m<\infty
\quad\text{and}\quad
\lim_{m\rightarrow\infty}\mathcal B_{\Gamma , 2,\xw}^m<\infty.
\]
Then there exist constants \(\mathcal C_{\xv}>0\) and \(\mathcal C_{\xw}>0\) such that
\begin{gather}
\|u\|_{\xv,\Omega,\wghtv_2}
\leq
\mathcal C_{\xv}\|u\|_{1,2,\Omega,\wghtv_0,\wghtv_1},
\label{62}\\[5pt]
\|u\|_{\xw,\Gamma,\wghtw}
\leq
\mathcal C_{\xw}\|u\|_{1,2,\Omega,\wghtv_0,\wghtv_1},
\label{63}
\end{gather}
for every \(u\in W^{1,2}(\Omega;\wghtv_0,\wghtv_1)\).
\end{corollary}



\subsubsection{Study of Eigenvalues} We now assume the following condition.
\begin{enumerate}[label=($\mathtt{W}_{\arabic*}$)]
\setcounter{enumi}{3}
\item \label{47} For every \((k,i)\in\{0,1\}\times\mathbb N\) and almost every \(\pvr\in\hat U_{k,i}\),
\[
{
\kp_{k,i}^{2}(\pvr)\wghtv_1(\pvr)
\leq\ck_2\wghtv_2(\pvr)
}
\quad\text{and}\quad
\int_{\hat U_{k,i}}\wghtv_2\,\dvg\leq\ck_3.
\]
\end{enumerate}


For weights \(\wghtv\) and \(\wghtw\), we use the notation
\[
\dwv=\wghtv\,\dvg,
\qquad
\dww=\wghtw\,\dsg,
\qquad
\wghtv(U)=\int_U\wghtv\,\dvg,
\qquad
\wghtw(U\cap\Gamma)=\int_{U\cap\Gamma}\wghtw\,\dsg,
\]
for every measurable set \(U\subset M\).


\begin{theorem}\label{59}
Assume the hypotheses of Corollary \ref{57}. Suppose that \(\wghtv_1\in C^1(M)\),
\[
\iwe\in L^{\xv/(\xv-2)}(\Omega;\wghtv_2)\cap L^\infty(\Omega),
\]
and that the sets
\[
\{\pvr\in\Omega\mid\iwe(\pvr)>0\}
\quad\text{and}\quad
\{\pvr\in\Omega\mid\iwe(\pvr)<0\}
\]
have nonempty interiors. Then problem \eqref{61} has two infinite sequences of eigenvalues
\[
\cdots\leq\lambda_2^-\leq\lambda_1^-<0<\lambda_1^+\leq\lambda_2^+\leq\cdots,
\]
{with \(\lambda_k^-\rightarrow-\infty\) and \(\lambda_k^+\rightarrow+\infty\) as \(k\rightarrow\infty\).}

If \(u\in\womd\) is an eigenfunction of \eqref{61} and condition \ref{47} holds, then \(u\in L^\infty(\Omega)\). Moreover, if \(u\in W_0^{1,2}(\overline\Omega;\wghtv_0,\wghtv_1)\), then \eqref{87} holds.

{Here, \(W_0^{1,2}(\overline\Omega;\wghtv_0,\wghtv_1)\) denotes the closure in \(W^{1,2}(\Omega;\wghtv_0,\wghtv_1)\) of the restrictions to \(\Omega\) of functions in \(C_0^\infty(M)\).}
\end{theorem}


%


\medskip

The remainder of the paper is organized as follows. Section \ref{141} presents the preliminary definitions and results. In Section \ref{142}, we establish an extension result and prove continuous and compact weighted Sobolev embedding and trace theorems. Section \ref{143} is devoted to elliptic estimates and the analysis of problem \eqref{61}. In particular, we prove the existence of infinitely many positive and negative eigenvalues and, under additional assumptions, establish the boundedness and decay at infinity of the corresponding eigenfunctions. Finally, the appendix contains the proof of a technical lemma used to derive the elliptic estimates.

\section{Preliminaries} \label{141}

Let $(M,g)$ be a smooth Riemannian manifold. For every { nonnegative} integer $k$ and every smooth function $u:M\rightarrow\mathbb{R}$, we denote by $\nabla^k u$ the $k$th covariant derivative of $u$. {  We set $\nabla^0u=u$.} For $k\geq1$, the pointwise norm of $\nabla^k u$ is defined in local coordinates by
\[
{ 
\left|\nabla^k u\right|^2
=
g^{i_1j_1}\cdots g^{i_kj_k}
(\nabla^k u)_{i_1\ldots i_k}
(\nabla^k u)_{j_1\ldots j_k}.
}
\]

Recall that $(\nabla u)_i=\partial_i u$, while
\begin{equation}\label{137}
{ 
(\nabla^2u)_{ij}=\partial_{ij}u-\Gamma_{ij}^{\ell}\partial_{\ell}u.
}
\end{equation}

We use the following weighted Sobolev spaces; see \cite{kilpel1994weightedcapacity, turesson2000potential, hebey2000nonlinearanaly, kristaly2010variationalprinciples} for related definitions in Euclidean and Riemannian settings. Let $U$ be an open subset of $M$ and let $\qxp\geq1$. For $i=0,1,2$, define
\[
L^{\qxp}(U;\wghtv_i)
:=
\left\{
 u:U\rightarrow\mathbb{R}
 \mid
 u\text{ is measurable and }
 \int_U|u|^{\qxp}\,\dwv_i<\infty
\right\}.
\]

We denote
\begin{gather*}
{ 
\mathcal{C}^{k,\qxp}(\Omega;\wghtv_0,\ldots,\wghtv_k)
:=
\left\{
 u\in C^k(\Omega)
 \mid
 \sum_{i=0}^k\int_\Omega|\nabla^iu|^{\qxp}\,\dwv_i<\infty
\right\},
\qquad k=1,2,
}
\\[5pt]
{ 
C_0^\infty(\Omega)
:=
\left\{
 u\in C^\infty(\Omega)
 \mid
 \spt u\Subset\Omega
\right\},
}
\\[5pt]
C_0^\infty(\overline\Omega)
:=
\left\{
 u\in C^\infty(\Omega)
 \mid
 \text{there exists }\overline u\in C_0^\infty(M)
 \text{ such that }u=\overline u|_\Omega
\right\},
\\[5pt]
{ 
L^{\qxp}_{\loc}(\overline\Omega)
:=
\left\{
 u:\Omega\rightarrow\mathbb{R}
 \mid
 u\in L^{\qxp}(U\cap\Omega)
 \text{ for every open set }U\Subset M
\right\}.
}
\end{gather*}
Recall that $\wghtv_0$ and $\wghtv_1$ are bounded from above on compact subsets of $M$.

\begin{definition}$ $
\begin{enumerate}[label=(\roman*)]

{ 
\item The Sobolev space $W^{k,\qxp}(\Omega;\wghtv_0,\ldots,\wghtv_k)$, $k=1,2$, is the completion of $\mathcal{C}^{k,\qxp}(\Omega;\wghtv_0,\ldots,\wghtv_k)$ with respect to the norm
\[
\|u\|_{k,\qxp,\Omega,\wghtv_0,\ldots,\wghtv_k}
:=
\left(
\sum_{i=0}^k\int_\Omega|\nabla^iu|^{\qxp}\,\dwv_i
\right)^{\frac1{\qxp}}.
\]
}

{ 
\item The Sobolev space $W_0^{k,\qxp}(\overline\Omega;\wghtv_0,\ldots,\wghtv_k)$, $k=1,2$, is the completion of $C_0^\infty(\overline\Omega)$ with respect to the norm $\|\cdot\|_{k,\qxp,\Omega,\wghtv_0,\ldots,\wghtv_k}$.
}

{ 
\item The Sobolev space $W_0^{k,\qxp}(\Omega;\wghtv_0,\ldots,\wghtv_k)$, $k=1,2$, is the completion of $C_0^\infty(\Omega)$ with respect to the norm $\|\cdot\|_{k,\qxp,\Omega,\wghtv_0,\ldots,\wghtv_k}$.
}
\end{enumerate}
\end{definition}

\begin{definition}
We say that $u\in\womd$ is a weak solution of \eqref{61} if
\[
\int_\Omega g(\nabla u,\nabla v)\,\dwv_1
+
\int_\Omega uv\,\dwv_0
=
\lambda\int_\Omega\iwe uv\,\dwv_2
\]
for every $v\in\womd$.
\end{definition}


\section{Embeddings of Weighted Sobolev Spaces} \label{142}

In this section, we study continuous and compact embeddings and traces
for the weighted Sobolev space $W^{1,\qxp}(\Omega;\wghtv_0,\wghtv_1)$.

The proofs combine local Euclidean inequalities with the geometry of
the charts and the behavior of the weights. The quantities
\(\mathcal B_{\qxp,\qxp_0}^m\) and
\(\mathcal B_{\Gamma,\qxp,\qxp_1}^m\) control the behavior of functions
in the noncompact part of \(\Omega\). The section begins with an
extension result, followed by the embedding and trace theorems.

\begin{lemma} \label{6}
Let $(k,i)\in \{0,1\}\times \mathbb{N}$ and $\delta \geq 2$. There exists a smooth function $\zeta : \hat U_{k,i} \rightarrow [0,1]$ such that
$$
\zeta = 1 \text { in } U_{k,i}, \quad \spt \zeta \subset \psi_{k,i}\left(B\left(0,r_{k,i}+\delta^{-1}(\hat r_{k,i}-r_{k,i})\right)\right),
$$
and
$$
|\nabla \zeta|_g \leq C(n)\bnorm d(\psi_{k,i}^{-1})\bnorm\delta(\hat r_{k,i}-r_{k,i})^{-1} \quad \text { in } \hat U_{k,i}.
$$
\end{lemma}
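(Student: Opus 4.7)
The plan is to construct $\zeta$ by pulling back a standard Euclidean bump function through the chart $\psi_{k,i}$, and then to convert the Euclidean gradient estimate into a Riemannian one using the definition of $\|d(\psi_{k,i}^{-1})\|$.

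\textbf{Step 1 (Euclidean cutoff).} Fix a smooth nonincreasing $\chi:\mathbb{R}\to[0,1]$ with $\chi=1$ on $(-\infty,1/2]$ and $\chi=0$ on $[1,\infty)$, satisfying $|\chi'|\le C(n)$. On the ball $B(0,\hat r_{k,i})\subset\mathbb{R}^n$ define
\[
\tilde\zeta(x):=\chi\!\left(\frac{\delta(|x|-r_{k,i})}{\hat r_{k,i}-r_{k,i}}\right).
\]
The argument of $\chi$ is $\le 0$ for $|x|\le r_{k,i}$, so $\tilde\zeta$ equals $1$ on a neighborhood of $0$ and is therefore smooth across the origin (even though $|x|$ is not); elsewhere smoothness follows from the smoothness of $\chi$ and of $x\mapsto|x|$ on $\mathbb{R}^n\setminus\{0\}$. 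By construction $\tilde\zeta=1$ on $B(0,r_{k,i})$ and $\spt\tilde\zeta\subset\overline{B(0,r_{k,i}+\delta^{-1}(\hat r_{k,i}-r_{k,i}))}$ (which lies strictly inside $B(0,\hat r_{k,i})$ when $\delta>1$; if $\delta=1$, a trivial reparametrization of $\chi$ keeps the support strictly inside).

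\textbf{Step 2 (Pullback).} Because $\psi_{k,i}:B(0,\hat r_{k,i})\to \hat U_{k,i}$ is a diffeomorphism by \ref{2}(a), set $\zeta:=\tilde\zeta\circ\psi_{k,i}^{-1}:\hat U_{k,i}\to[0,1]$. The first two required properties are immediate: $\zeta=1$ on $\psi_{k,i}(B(0,r_{k,i}))=U_{k,i}$, and $\spt\zeta\subset \psi_{k,i}(B(0,r_{k,i}+\delta^{-1}(\hat r_{k,i}-r_{k,i})))$.

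\textbf{Step 3 (Gradient bound).} By the chain rule, at any point $\pvr\in\hat U_{k,i}$ with $x=\psi_{k,i}^{-1}(\pvr)$,
\[
d\zeta_{\pvr}(v)=d\tilde\zeta_{x}\bigl((d\psi_{k,i}^{-1})_{\pvr}v\bigr), \qquad v\in T_{\pvr}M.
\]
Hence
\[
|d\zeta_{\pvr}(v)|\le |\nabla\tilde\zeta(x)|_{\delta_{\mathbb{R}^n}}\,\bigl|(d\psi_{k,i}^{-1})_{\pvr}v\bigr|_{\delta_{\mathbb{R}^n}}\le |\nabla\tilde\zeta(x)|_{\delta_{\mathbb{R}^n}}\,\|d(\psi_{k,i}^{-1})\|\,|v|_g,
\]
using the very definition of $\|d(\psi_{k,i}^{-1})\|$. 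Taking the supremum over unit $v\in T_{\pvr}M$ identifies the left-hand side with $|\nabla\zeta|_g(\pvr)$. A direct computation of $\nabla\tilde\zeta$ gives $|\nabla\tilde\zeta|_{\delta_{\mathbb{R}^n}}\le C(n)\,\delta/(\hat r_{k,i}-r_{k,i})$, and combining yields the claimed inequality
\[
|\nabla\zeta|_g\le C(n)\,\|d(\psi_{k,i}^{-1})\|\,\delta\,(\hat r_{k,i}-r_{k,i})^{-1}.
\]

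The only mildly delicate point is the smoothness of $\tilde\zeta$ at $x=0$, which is handled by the observation that $\tilde\zeta$ is locally constant there; everything else is routine manipulation of the chain rule and the chart norms already defined in the paper.
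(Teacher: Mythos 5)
Your proposal is correct and follows essentially the same route as the paper: build a Euclidean cutoff on $B(0,\hat r_{k,i})$ with the right support and gradient scale, pull it back through $\psi_{k,i}^{-1}$, and convert the Euclidean gradient bound via the chain rule and the definition of $\|d(\psi_{k,i}^{-1})\|$. Your version is merely more explicit about the radial construction, the smoothness at the origin, and the $\delta=1$ edge case, none of which changes the argument.
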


\begin{proof}
There exists a smooth function $\zeta_0:B(0,\hat r_{k,i})\subset\mathbb R^n\rightarrow[0,1]$ such that
$$
\zeta_0=1 \text { in } B(0,r_{k,i}), \quad \spt\zeta_0\subset B\left(0,r_{k,i}+\delta^{-1}(\hat r_{k,i}-r_{k,i})\right),
$$
and
$$
|\nabla\zeta_0|_{\mec}\leq C_1(n)\delta(\hat r_{k,i}-r_{k,i})^{-1}.
$$
Define $\zeta:=\zeta_0\circ\psi_{k,i}^{-1}$. Then
$$
\zeta=1 \text { in } U_{k,i}, \quad \spt\zeta\subset\psi_{k,i}\left(B\left(0,r_{k,i}+\delta^{-1}(\hat r_{k,i}-r_{k,i})\right)\right),
$$
and
\begin{equation*}
\begin{aligned}
|\nabla\zeta|_g
\leq &\,\displaystyle \bnorm d(\psi_{k,i}^{-1})\bnorm\cdot|\nabla\zeta_0|_{\mec}\
\leq &\,\displaystyle C_1(n)\bnorm d(\psi_{k,i}^{-1})\bnorm\delta(\hat r_{k,i}-r_{k,i})^{-1}.
\end{aligned}
\end{equation*}
This proves the lemma.
\end{proof}

\begin{proposition} \label{22}
Assume that conditions \ref{5}--\ref{3} hold for $\qxp\geq1$, and that the weight functions $\wghtv_0$ and $\wghtv_1$ satisfy conditions \ref{138}--\ref{139}. Let $\delta\geq2$. Then there exists a bounded linear \textnormal{extension operator}
$$
\mathcal E:W^{1,\qxp}(\Omega;\wghtv_0,\wghtv_1)\rightarrow W^{1,\qxp}(M;\wghtv_0,\wghtv_1)
$$
such that
\begin{gather*}
\mathcal Eu=u \text { a.e. in } \Omega,\\[5pt]
\spt\mathcal Eu\subset\bigcup_{(k,i)\in\{0,1\}\times\mathbb N}\psi_{k,i}\left(B\left(0,r_{k,i}+\delta^{-1}(\hat r_{k,i}-r_{k,i})\right)\right),
\end{gather*}
and
\begin{equation}\label{9}
\begin{aligned}
\|\mathcal Eu&\|_{1,\qxp,M,\wghtv_0,\wghtv_1}\\[3pt]
\leq &\,\displaystyle C(n,\qxp)\rd_1^{\frac{q-1}{q}}\left[(1+\ck_0\rd_2)(1+\delta^{\qxp}\ck_{\qxp})\int_\Omega|u|^{\qxp}\,\dwv_0
+\left(1+\ck_0\rd_2^{1+\qxp}\right)\int_\Omega|\nabla u|^{\qxp}\,\dwv_1\right]^{\frac1\qxp}.
\end{aligned}
\end{equation}
\end{proposition}

\begin{proof}

Using Lemma \ref{6}, there exists a partition of unity $\{\zeta _{k,i} : M \rightarrow [0,1]\} _{(k,i) \in \{0,1\} \times \mathbb{N}} \subset C^{\infty}(M)$   subordinate to $\{ \hat{U}_{k,i} \}_{ (k,i) \in \{0,1\} \times \mathbb{N} }$ such that  
$$
\spt \zeta _{k,i} \subset \psi _{k,i} \left( B\left(0 ,  r _{k,i} + \delta ^{-1}(\hat r _{k,i} -r _{k,i} ) \right) \right),    \quad \sum_{k,i} \zeta _{k,i}=1 \text { on }\Omega,
$$ 
and 
$$ 
|\nabla \zeta _{k,i} |_g \leq  C_1(n) \delta \kp _{k , i} \quad \text {  in } \quad \hat U _{k,i}.
$$



Define $f:\mathbb R^n\rightarrow\mathbb R^n$ by $f(x)=(x_1,x_2,\ldots,|x_n|)$. Let $u\in W^{1,\qxp}(\Omega;\wghtv_0,\wghtv_1)$. For $(k,i)\in\{0,1\}\times\mathbb N$, define
\begin{gather*}
\overline u_{0,i}:=u\zeta_{0,i}\quad\text { in }\hat U_{0,i},\\[3pt]
\overline u_{1,j}(\pvr):=\left(u\circ\psi_{1,j}\circ f\circ\psi_{1,j}^{-1}\right)(\pvr)\zeta_{1,j}(\pvr)
\quad\text { if }\pvr\in\hat U_{1,j}.
\end{gather*}

By \ref{138} and \ref{3},
\begin{align}
\int_M|\overline u_{1,j}|^{\qxp}\,\dwv_0
=&\,\displaystyle\int_{\psi_{1,j}(B(0,\hat r_{1,j})\cap\{x_n>0\})}|\overline u_{1,j}|^{\qxp}\,\dwv_0
+\int_{\psi_{1,j}(B(0,\hat r_{1,j})\cap\{x_n<0\})}|\overline u_{1,j}|^{\qxp}\,\dwv_0\nonumber\\[3pt]
\leq&\,\displaystyle(1+\ck_0\rd_2)\int_{\hat U_{1,j}\cap\Omega}|u|^{\qxp}\,\dwv_0.\label{7}
\end{align}

Furthermore, from $|a+b|^{\qxp}\leq2^{\qxp-1}(|a|^{\qxp}+|b|^{\qxp})$, together with  \ref{3} and \ref{138}--\ref{139},
\begin{align}
\int_{M}|\nabla \overline u _{1,j}|^\qxp \, \dwv _1 
= & \displaystyle \, \int _{\psi _{1,j} (B(0,\hat r  _{1,j}) \cap \{x_n >0\})} |\nabla \overline u _{1,j}|^\qxp \, \dwv _1 + \int _{\psi _{1,j} (B(0,\hat r  _{1,j}) \cap \{x_n < 0\})} |\nabla \overline u _{1,j}|^\qxp \, \dwv _1 \nonumber\\[3pt]
 \leq & \displaystyle \,  \int _{\psi _{1,j} (B(0,\hat r  _{1,j}) \cap \{x_n >0\})} 2^{\qxp-1} \left( | \zeta _{1,j} \nabla u|^\qxp  + |u \nabla \zeta _{1,j} |^\qxp \right) \, \dwv _1  \nonumber \\[3pt]
& \displaystyle \, + \int _{\psi _{1,j} (B(0,\hat r  _{1,j}) \cap \{x_n > 0\})} 2^{\qxp-1} \ck _0 \rd _2 \left( |\rd _2 \zeta _{1,j} \nabla u |^\qxp  + |u \nabla \zeta _{1,j} |^\qxp \right) \, \dwv _1 \nonumber\\[5pt]
 \leq & \displaystyle \,  C_2 (n , \qxp ) \left[\delta ^{\qxp} \ck _{\qxp} (1+ \ck _0 \rd _{2})\int_{\hat U _{1,j} \cap \Omega }|u|^\qxp \, \dwv _0 +  \left(1+ \ck _0 \rd _2 ^{1+\qxp} \right)\int_{\hat U _{1,j} \cap \Omega }|\nabla u|^\qxp \, \dwv _1 \right]. \label{8}
\end{align}

{
For the interior charts, the same computation without reflection gives
\begin{align*}
\int_M|\overline u_{0,i}|^{\qxp}\,\dwv_0
&\leq\int_{\hat U_{0,i}}|u|^{\qxp}\,\dwv_0,\\[3pt]
\int_M|\nabla\overline u_{0,i}|^{\qxp}\,\dwv_1
&\leq C_3(n,\qxp)\left[\delta^{\qxp}\ck_{\qxp}\int_{\hat U_{0,i}}|u|^{\qxp}\,\dwv_0
+\int_{\hat U_{0,i}}|\nabla u|^{\qxp}\,\dwv_1\right].
\end{align*}
}

Define
$$
\mathcal Eu:=\sum_{(k,i)\in\{0,1\}\times\mathbb N}\overline u_{k,i}.
$$
Then $\mathcal E$ is linear and $\mathcal Eu=u$ a.e. in $\Omega$. { Since at most $\rd_1$ terms are nonzero at each point,}
$$
{ \left|\sum_{k,i}\overline u_{k,i}\right|^{\qxp}\leq\rd_1^{\qxp-1}\sum_{k,i}|\overline u_{k,i}|^{\qxp}.}
$$
Hence, by \ref{1}, \eqref{7}, and \eqref{8}, we obtain \eqref{9}.
\end{proof}

\subsection{Compact Embeddings}

The proof of our embedding theorems is based on the following lemma.
\begin{lemma} \label{21}
$ $
\begin{enumerate}[label=(\roman*)]
\item \label{18} Assume that
\begin{gather}
W^{1,\qxp}(\Omega_m;\wghtv_0,\wghtv_1)\rightarrow L^{\qxp_0}(\Omega_m;\wghtv_2) \text { is compact for every }m,\label{10}\\[3pt]
\lim_{m\rightarrow\infty}\sup_{\|u\|_{1,\qxp,\Omega,\wghtv_0,\wghtv_1}\leq1}\|u\|_{\qxp_0,\Omega^m,\wghtv_2}=0.\label{11}
\end{gather}
Then
$$
W^{1,\qxp}(\Omega;\wghtv_0,\wghtv_1)\rightarrow L^{\qxp_0}(\Omega;\wghtv_2)
$$
is compact.

On the other hand, if
$$
W^{1,\qxp}(\Omega;\wghtv_0,\wghtv_1)\rightarrow L^{\qxp_0}(\Omega;\wghtv_2)
$$
is compact, then \eqref{11} holds.

\item \label{19} If
\begin{gather}
W^{1,\qxp}(\Omega_m;\wghtv_0,\wghtv_1)\rightarrow L^{\qxp_0}(\Omega_m;\wghtv_2) \text { is continuous for every }m,\label{12}\\[5pt]
\lim_{m\rightarrow\infty}\sup_{\|u\|_{1,\qxp,\Omega,\wghtv_0,\wghtv_1}\leq1}\|u\|_{\qxp_0,\Omega^m,\wghtv_2}<\infty.\label{13}
\end{gather}
Then
$$
W^{1,\qxp}(\Omega;\wghtv_0,\wghtv_1)\rightarrow L^{\qxp_0}(\Omega;\wghtv_2)
$$
is continuous.

If
$$
W^{1,\qxp}(\Omega;\wghtv_0,\wghtv_1)\rightarrow L^{\qxp_0}(\Omega;\wghtv_2)
$$
is continuous, then \eqref{13} holds.
\end{enumerate}
\end{lemma}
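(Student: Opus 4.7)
\textbf{Proof plan for Lemma \ref{21}.}

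For part \ref{18}, sufficiency, I would prove compactness by the standard diagonal/tail splitting argument. Given a bounded sequence $(u_k) \subset W^{1,\qxp}(\Omega;\wghtv_0,\wghtv_1)$ with $\|u_k\|_{1,\qxp,\Omega,\wghtv_0,\wghtv_1}\le 1$, for each fixed $m$ the restriction $(u_k|_{\Omega_m})$ is bounded in $W^{1,\qxp}(\Omega_m;\wghtv_0,\wghtv_1)$, so by \eqref{10} it has a subsequence converging in $L^{\qxp_0}(\Omega_m;\wghtv_2)$. A Cantor diagonal selection yields a single subsequence, still denoted $(u_k)$, that converges in $L^{\qxp_0}(\Omega_m;\wghtv_2)$ for every $m$. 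To upgrade convergence to the full $\Omega$, fix $\varepsilon>0$ and use \eqref{11} to pick $m$ so that $\|u_k-u_\ell\|_{\qxp_0,\Omega^m,\wghtv_2}\le 2\sup_{\|v\|_{1,\qxp,\Omega,\wghtv_0,\wghtv_1}\le 1}\|v\|_{\qxp_0,\Omega^m,\wghtv_2}<\varepsilon$ for all $k,\ell$; on $\Omega_m$ the diagonal sequence is Cauchy in $L^{\qxp_0}$, so splitting $\|u_k-u_\ell\|_{\qxp_0,\Omega,\wghtv_2}^{\qxp_0}\le\|u_k-u_\ell\|_{\qxp_0,\Omega_m,\wghtv_2}^{\qxp_0}+\|u_k-u_\ell\|_{\qxp_0,\Omega^m,\wghtv_2}^{\qxp_0}$ gives a Cauchy sequence in $L^{\qxp_0}(\Omega;\wghtv_2)$.

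For the necessity of \eqref{11}, I would argue by contradiction. If it fails, there exist $\varepsilon_0>0$, a subsequence $m_k\to\infty$, and $u_k$ with $\|u_k\|_{1,\qxp,\Omega,\wghtv_0,\wghtv_1}\le 1$ and $\|u_k\|_{\qxp_0,\Omega^{m_k},\wghtv_2}\ge\varepsilon_0$. By the assumed compactness, a subsequence converges to some $u$ in $L^{\qxp_0}(\Omega;\wghtv_2)$. Since $\bar\Omega_m\uparrow\Omega$ (as $M=\bigcup_m D_m$ with $\bar D_m\subset D_{m+1}$), dominated convergence gives $\|u\|_{\qxp_0,\Omega^m,\wghtv_2}\to 0$, while the triangle inequality combined with the $L^{\qxp_0}$-convergence forces $\|u_k\|_{\qxp_0,\Omega^{m_k},\wghtv_2}\to 0$, contradicting $\ge\varepsilon_0$.

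Part \ref{19} is easier and requires only a splitting argument. For sufficiency, \eqref{13} fixes $m_0$ and a constant $K$ such that $\|u\|_{\qxp_0,\Omega^{m_0},\wghtv_2}\le K\|u\|_{1,\qxp,\Omega,\wghtv_0,\wghtv_1}$ for all $u$, while the continuous embedding on $\Omega_{m_0}$ from \eqref{12} gives $\|u\|_{\qxp_0,\Omega_{m_0},\wghtv_2}\le C_{m_0}\|u\|_{1,\qxp,\Omega_{m_0},\wghtv_0,\wghtv_1}\le C_{m_0}\|u\|_{1,\qxp,\Omega,\wghtv_0,\wghtv_1}$; adding the two $L^{\qxp_0}$-norms raised to the $\qxp_0$-power yields continuity on $\Omega$. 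The converse is immediate: continuity of the global embedding implies $\|u\|_{\qxp_0,\Omega^m,\wghtv_2}\le\|u\|_{\qxp_0,\Omega,\wghtv_2}\le C\|u\|_{1,\qxp,\Omega,\wghtv_0,\wghtv_1}$ for every $m$, giving \eqref{13}.

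The only delicate point is the necessity direction in \ref{18}, where one must invoke that $\Omega^m$ eventually exhausts a set of zero $\wghtv_2$-measure to pass from $L^{\qxp_0}$-convergence on $\Omega$ to vanishing on the tails; everything else is a routine diagonal extraction plus triangle-inequality splitting.
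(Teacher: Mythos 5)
Your proposal is correct and follows essentially the same route as the paper: split $\Omega$ into $\Omega_m$ and the tail $\Omega^m$, use the local compactness/continuity together with the uniform tail estimate, and prove the converse by contradiction using that the tails are eventually $\wghtv_2$-negligible for the limit function. The only cosmetic difference is that you organize the extraction as a Cantor diagonal over all $m$ and negate \eqref{11} directly, whereas the paper first rewrites \eqref{11} as a single $\varepsilon$-inequality and contradicts that; both arguments are equivalent, and your diagonal step in fact makes explicit a subsequence selection the paper leaves implicit.
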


\begin{proof}
$\ref{18}$ Notice that \eqref{11} is equivalent to the statement that for every $\epsilon>0$ there exists $m_0$ such that
\begin{equation}\label{17}
\|u\|_{\qxp_0,\Omega,\wghtv_2}^{\qxp_0}
\leq\epsilon\|u\|_{1,\qxp,\Omega,\wghtv_0,\wghtv_1}^{\qxp_0}
+\|u\|_{\qxp_0,\Omega_{m_0},\wghtv_2}^{\qxp_0}
\quad\forall u\in W^{1,\qxp}(\Omega;\wghtv_0,\wghtv_1).
\end{equation}

Let $(u_j)$ be a bounded sequence in $W^{1,\qxp}(\Omega;\wghtv_0,\wghtv_1)$, with
$$
\|u_j\|_{1,\qxp,\Omega,\wghtv_0,\wghtv_1}\leq C_1.
$$
{By \eqref{10} and a diagonal argument, there exists a subsequence, still denoted by $(u_j)$, which is a Cauchy sequence in $L^{\qxp_0}(\Omega_m;\wghtv_2)$ for every $m\in\mathbb N$.}
Let $\epsilon>0$. From \eqref{17}, for sufficiently large $i,j$,
\begin{align*}
\|u_j-u_i\|_{\qxp_0,\Omega,\wghtv_2}^{\qxp_0}
\leq&\,\displaystyle\epsilon\|u_j-u_i\|_{1,\qxp,\Omega,\wghtv_0,\wghtv_1}^{\qxp_0}
+\|u_j-u_i\|_{\qxp_0,\Omega_{m_0},\wghtv_2}^{\qxp_0}\\[3pt]
\leq&\,\displaystyle\left(2^{\qxp_0}C_1^{\qxp_0}+1\right)\epsilon.
\end{align*}
Consequently, $(u_j)$ is a Cauchy sequence in $L^{\qxp_0}(\Omega;\wghtv_2)$. Hence the embedding is compact.

Now, let the embedding be compact and assume that \eqref{17} does not hold. Then there exist $\epsilon>0$ and a sequence $(u_j)$ in $W^{1,\qxp}(\Omega;\wghtv_0,\wghtv_1)$ such that
$$
\|u_j\|_{\qxp_0,\Omega,\wghtv_2}^{\qxp_0}
>\epsilon\|u_j\|_{1,\qxp,\Omega,\wghtv_0,\wghtv_1}^{\qxp_0}
+\|u_j\|_{\qxp_0,\Omega_j,\wghtv_2}^{\qxp_0}.
$$
Write
$$
\tilde u_j:=\frac{u_j}{\|u_j\|_{1,\qxp,\Omega,\wghtv_0,\wghtv_1}}.
$$
Then
\begin{equation}\label{20}
\|\tilde u_j\|_{\qxp_0,\Omega,\wghtv_2}^{\qxp_0}
>\epsilon+\|\tilde u_j\|_{\qxp_0,\Omega_j,\wghtv_2}^{\qxp_0}.
\end{equation}
Since $(\tilde u_j)$ is bounded in $W^{1,\qxp}(\Omega;\wghtv_0,\wghtv_1)$, there is a subsequence converging to $\tilde u$ in $L^{\qxp_0}(\Omega;\wghtv_2)$. Moreover,
$$
\left|\|\tilde u_j\|_{\qxp_0,\Omega_j,\wghtv_2}-\|\tilde u\|_{\qxp_0,\Omega,\wghtv_2}\right|
\leq\|\tilde u_j-\tilde u\|_{\qxp_0,\Omega,\wghtv_2}
+\left|\|\tilde u\|_{\qxp_0,\Omega_j,\wghtv_2}-\|\tilde u\|_{\qxp_0,\Omega,\wghtv_2}\right|.
$$
Taking the limit in \eqref{20}, we obtain
$$
\|\tilde u\|_{\qxp_0,\Omega,\wghtv_2}^{\qxp_0}
\geq\epsilon+\|\tilde u\|_{\qxp_0,\Omega,\wghtv_2}^{\qxp_0},
$$
which is a contradiction.

\medskip

$\ref{19}$ The second part of the lemma can be proved similarly to $\ref{18}$.
\end{proof}

\subsubsection{Proof of Proposition \ref{33}}

We have
$$
\|v\|_{\qxp_0,B(0,1)}\leq C_1\|v\|_{1,\qxp,B(0,1)\cap\{x_n>0\}},
\quad\forall v\in W^{1,\qxp}(B(0,1)\cap\{x_n>0\}),
$$
where $B(0,1)\subset \mathbb{R}^n$ and $C_1:=C_1(\qxp , \qxp _0 , n , B(0,1)) >0 $.

Let $u\in W^{1,\qxp}(\Omega;\wghtv_0,\wghtv_1)$. Then,
\begin{equation*}
\begin{aligned}
\left(\int_{\hat U_{k,i}\cap\Omega}|u|^{\qxp_0}\,\dvg\right)^{\frac1{\qxp_0}}
= &\,\displaystyle \left(\int_{ B(0 , \hat r_{k,i} )\cap \{x_n >0\} } | u \circ \psi _{k,i} |^{ \qxp _0 }  \sqrt{\det [ g_{\gij} ]} \, \dx \right)^{\frac{1}{ \qxp _0 }}\\[3pt]
  \leq & \displaystyle \,  C_1  \bnorm G_{k,i}\bnorm ^{\frac{1}{ \qxp _0 }}  \hat{r}_{k,i}^{\frac{n}{ \qxp _0  }} \left(\int_{ B(0 , 1 ) \cap \{x_n >0\} } | u \circ \psi _{k,i}(\hat r _{k,i} y) |^{ \qxp _0 } \,  \dy \right)^{\frac{1}{   \qxp _0}  }\\[5pt]
\leq&\,\displaystyle C_2  \bnorm G_{k,i}\bnorm^{\frac1{\qxp_0}}
\bnorm G_{k,i}^{-1}\bnorm^{\frac1\qxp}\hat r_{k,i}^{\frac n{\qxp_0}}\\[3pt]
&\,\displaystyle\cdot\left(\hat r_{k,i}^{-n}\int_{\hat U_{k,i}\cap\Omega}|u|^{\qxp}\,\dvg
+\hat r_{k,i}^{-n+\qxp}\bnorm d\psi_{k,i}\bnorm^{\qxp}
\int_{\hat U_{k,i}\cap\Omega}|\nabla u|^{\qxp}\,\dvg\right)^{\frac1\qxp},
\end{aligned}
\end{equation*}
where $C_2:= C_1 n^{1/2}$.

From  \ref{139} and \ref{46},
\begin{equation}\label{155}
\begin{aligned}
\int_{\hat U_{k,i}\cap\Omega}|u|^{\qxp_0}\,\dwv_2 \leq & \, \displaystyle  \left[b_2  ^{\frac{1}{ \qxp _0 }} (\psi _{k,i} (0)) C_2 \bnorm G_{k,i}\bnorm ^{\frac{1}{  \qxp _0 }}  \bnorm G_{k,i} ^{-1}\bnorm ^{\frac{1}{ \qxp}}   \hat r_{k,i}^{\frac{n}{ \qxp _0   }-\frac{n}{\qxp }+1} \right] ^{ \qxp _0 }   \\[3pt]
& \, \displaystyle   \cdot \left(\hat r_{k,i}^{-\qxp  }\int_{\hat U _{k,i}}| u |^{\qxp} \, \dvg + \bnorm d \psi _{k,i}\bnorm ^{\qxp}  \int_{\hat U _{k,i}}|\nabla  u  |^{\qxp}  \, \dvg \right)^{\frac{ \qxp _0 }{ \qxp}  } \\[3pt]
\leq&\,\displaystyle C_2^{\qxp_0}
\left[\frac{b_2^{\frac1{\qxp_0}}(\psi_{k,i}(0))}{b_1^{\frac1\qxp}(\psi_{k,i}(0))}
\bnorm G_{k,i}\bnorm^{\frac1{\qxp_0}}
\bnorm G_{k,i}^{-1}\bnorm^{\frac1\qxp}
\hat r_{k,i}^{\frac n{\qxp_0}-\frac n\qxp+1}\right]^{\qxp_0}\\[3pt]
&\,\displaystyle\cdot\left(\hat r_{k,i}^{-\qxp}\bnorm d\psi_{k,i}\bnorm^{-\qxp}
\int_{\hat U_{k,i}\cap\Omega}|u|^{\qxp}\,\dwv_1
+\int_{\hat U_{k,i}\cap\Omega}|\nabla u|^{\qxp}\,\dwv_1\right)^{\frac{\qxp_0}\qxp}\\[3pt]
\leq&\,\displaystyle C_2^{\qxp_0}(\mathcal B_{\qxp,\qxp_0}^m)^{\qxp_0}
\left(\ck_{\qxp}\int_{\hat U_{k,i}\cap\Omega}|u|^{\qxp}\,\dwv_0
+\int_{\hat U_{k,i}\cap\Omega}|\nabla u|^{\qxp}\,\dwv_1\right)^{\frac{\qxp_0}\qxp},
\end{aligned}
\end{equation}
where condition \ref{139} was used in the last inequality.

Since
$$
\chi_{\Omega}
\leq\sum_{(k,i)\in\{0,1\}\times\mathbb N} \chi_{\hat U_{k,i}\cap \Omega} \leq \rd _1 \chi _{\Omega},
$$
from \ref{1}, \eqref{155}, and
$$
\sum_i|a_i|^\kappa\leq\left(\sum_i|a_i|\right)^\kappa,
\quad \kappa\geq1,
$$
we obtain
\begin{align*}
\|u\|_{\qxp_0,\Omega^{m},\wghtv_2}^{\qxp_0}
\leq&\,\displaystyle C_2 ^{\qxp_0}(\mathcal B_{\qxp,\qxp_0}^m)^{\qxp_0}
\left[\sum_{k,i}\left(\ck_{\qxp}\int_{\hat U_{k,i}\cap\Omega}|u|^{\qxp}\,\dwv_0
+\int_{\hat U_{k,i}\cap\Omega}|\nabla u|^{\qxp}\,\dwv_1\right)\right]^{\frac{\qxp_0}\qxp}\\[3pt]
\leq&\,\displaystyle C_2 ^{\qxp_0}(\mathcal B_{\qxp,\qxp_0}^m)^{\qxp_0}
[\rd_1(\ck_{\qxp}+1)]^{\frac{\qxp_0}\qxp}
\|u\|_{1,\qxp,\Omega,\wghtv_0,\wghtv_1}^{\qxp_0}.
\end{align*}
Thus,
\begin{equation}\label{24}
\|u\|_{\qxp_0,\Omega^{m},\wghtv_2}
\leq C_2 \mathcal B_{\qxp,\qxp_0}^m[\rd_1(\ck_{\qxp}+1)]^{\frac1\qxp}
\|u\|_{1,\qxp,\Omega,\wghtv_0,\wghtv_1}.
\end{equation}

Recall that, on every compact subset of $M$, the weights are bounded above and below by positive constants. Therefore, the local embedding in \eqref{12} is continuous when $\qxp_0\leq n\qxp/(n-\qxp)$, whereas the local embedding in \eqref{10} is compact when $\qxp_0<n\qxp/(n-\qxp)$. The proposition now follows from Lemma \ref{21} and \eqref{24}.

\begin{flushright}
$\square$
\end{flushright}

\subsection{Compact Trace Results}
Proceeding similarly to Lemma \ref{21}, we have the following lemma.
\begin{lemma} \label{25}
$ $
\begin{enumerate}[label=(\roman*)]
\item \label{26} Assume that
\begin{gather}
W^{1,\qxp}(\Omega_m;\wghtv_0,\wghtv_1)\rightarrow L^{\qxp_1}(\Gamma_m;\wghtw) \text { is compact for every }m,\label{27}\\[5pt]
\lim_{m\rightarrow\infty}\sup_{\|u\|_{1,\qxp,\Omega,\wghtv_0,\wghtv_1}\leq1}\|u\|_{\qxp_1,\Gamma^m,\wghtw}=0.\label{28}
\end{gather}
Then
$$
W^{1,\qxp}(\Omega;\wghtv_0,\wghtv_1)\rightarrow L^{\qxp_1}(\Gamma;\wghtw)
$$
is compact.

On the other hand, if the trace operator is compact, then \eqref{28} holds.

\item \label{29} If
\begin{gather}
W^{1,\qxp}(\Omega_m;\wghtv_0,\wghtv_1)\rightarrow L^{\qxp_1}(\Gamma_m;\wghtw) \text { is continuous for every }m,\label{30}\\[5pt]
\lim_{m\rightarrow\infty}\sup_{\|u\|_{1,\qxp,\Omega,\wghtv_0,\wghtv_1}\leq1}\|u\|_{\qxp_1,\Gamma^m,\wghtw}<\infty.\label{31}
\end{gather}
Then
$$
W^{1,\qxp}(\Omega;\wghtv_0,\wghtv_1)\rightarrow L^{\qxp_1}(\Gamma;\wghtw)
$$
is continuous.

If the trace operator is continuous, then \eqref{31} holds.
\end{enumerate}
\end{lemma}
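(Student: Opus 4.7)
The plan is to mirror the proof of Lemma \ref{21} almost verbatim, with the volume norm $\|\cdot\|_{\qxp _0,\cdot,\wghtv_2}$ replaced throughout by the boundary norm $\|\cdot\|_{\qxp_1,\cdot,\wghtw}$, and with the hypothesis on the compactness (resp.\ continuity) of the embedding into $L^{\qxp_0}(\Omega_m;\wghtv_2)$ replaced by the corresponding hypothesis \eqref{27} (resp.\ \eqref{30}) for the trace into $L^{\qxp_1}(\Gamma_m;\wghtw)$. The first observation is that \eqref{28} is equivalent to the statement that for every $\epsilon>0$ there exists $m_0 = m_0(\epsilon)$ such that
$$
\|u\|_{\qxp_1,\Gamma,\wghtw}^{\qxp_1} \le \epsilon\,\|u\|_{1,\qxp,\Omega,\wghtv_0,\wghtv_1}^{\qxp_1} + \|u\|_{\qxp_1,\Gamma_{m_0},\wghtw}^{\qxp_1}, \quad \forall u\in W^{1,\qxp}(\Omega;\wghtv_0,\wghtv_1),
$$
simply by splitting $\Gamma=\Gamma_{m_0}\cup\Gamma^{m_0}$ and applying the uniform smallness of the $\Gamma^{m_0}$-trace norm on the unit ball of $W^{1,\qxp}$.

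For part \ref{26}, let $(u_j)$ be a bounded sequence in $W^{1,\qxp}(\Omega;\wghtv_0,\wghtv_1)$, say $\|u_j\|_{1,\qxp,\Omega,\wghtv_0,\wghtv_1}\le C_1$, and fix $\epsilon>0$. Choose $m_0$ as above; by \eqref{27} and the boundedness of $(u_j|_{\Omega_{m_0}})$ in $W^{1,\qxp}(\Omega_{m_0};\wghtv_0,\wghtv_1)$, extract a subsequence (still denoted $(u_j)$) which is Cauchy in $L^{\qxp_1}(\Gamma_{m_0};\wghtw)$. Applying the splitting inequality to $u_j-u_i$, for all sufficiently large $i,j$ we get
$$
\|u_j-u_i\|_{\qxp_1,\Gamma,\wghtw}^{\qxp_1} \le \epsilon\,\|u_j-u_i\|_{1,\qxp,\Omega,\wghtv_0,\wghtv_1}^{\qxp_1} + \|u_j-u_i\|_{\qxp_1,\Gamma_{m_0},\wghtw}^{\qxp_1} \le (2^{\qxp_1}C_1^{\qxp_1}+1)\epsilon,
$$
and a standard diagonal argument in $\epsilon$ produces a Cauchy subsequence in $L^{\qxp_1}(\Gamma;\wghtw)$. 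For the converse of \ref{26}, assume compactness holds but \eqref{28} fails; then one finds $\epsilon>0$ and a sequence $(u_j)$ with $\|u_j\|_{\qxp_1,\Gamma,\wghtw}^{\qxp_1}> \epsilon\|u_j\|_{1,\qxp,\Omega,\wghtv_0,\wghtv_1}^{\qxp_1}+\|u_j\|_{\qxp_1,\Gamma_j,\wghtw}^{\qxp_1}$. Setting $\tilde u_j:=u_j/\|u_j\|_{1,\qxp,\Omega,\wghtv_0,\wghtv_1}$ and passing to a subsequence converging strongly to some $\tilde u$ in $L^{\qxp_1}(\Gamma;\wghtw)$ by compactness, one has $\|\tilde u_j\|_{\qxp_1,\Gamma_j,\wghtw}\to \|\tilde u\|_{\qxp_1,\Gamma,\wghtw}$ (monotone exhaustion $\chi_{\Gamma_j}\uparrow\chi_\Gamma$ plus $L^{\qxp_1}$ strong convergence), giving $\|\tilde u\|_{\qxp_1,\Gamma,\wghtw}^{\qxp_1}\ge \epsilon + \|\tilde u\|_{\qxp_1,\Gamma,\wghtw}^{\qxp_1}$, a contradiction.

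Part \ref{29} is handled by exactly the same splitting: from \eqref{30} and \eqref{31} there exist constants $C_2$, $C_3$ so that $\|u\|_{\qxp_1,\Gamma_{m_0},\wghtw}\le C_2\|u\|_{1,\qxp,\Omega_{m_0},\wghtv_0,\wghtv_1}$ and $\|u\|_{\qxp_1,\Gamma^{m_0},\wghtw}\le C_3\|u\|_{1,\qxp,\Omega,\wghtv_0,\wghtv_1}$, and the sum bounds the full trace norm. The converse implication in \ref{29} is by the same contradiction argument as above but without needing the monotone-convergence step.

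No genuinely new idea beyond Lemma \ref{21} is required; the transcription is mechanical provided one has a well-defined trace on each $\Gamma_m$, which is guaranteed by the standing assumption that $D_m\cap\Omega$ has Lipschitz boundary. The only step deserving a little care is the passage $\|\tilde u_j\|_{\qxp_1,\Gamma_j,\wghtw}\to \|\tilde u\|_{\qxp_1,\Gamma,\wghtw}$ in the converse of \ref{26}, which I expect to be the main (mild) technical obstacle, since it combines strong $L^{\qxp_1}(\Gamma;\wghtw)$-convergence with the exhaustion $\Gamma_j\uparrow\Gamma$; a two-step estimate writing $\|\tilde u_j\|_{\qxp_1,\Gamma_j,\wghtw}-\|\tilde u\|_{\qxp_1,\Gamma,\wghtw} = (\|\tilde u_j\|_{\qxp_1,\Gamma_j,\wghtw}-\|\tilde u\|_{\qxp_1,\Gamma_j,\wghtw}) + (\|\tilde u\|_{\qxp_1,\Gamma_j,\wghtw}-\|\tilde u\|_{\qxp_1,\Gamma,\wghtw})$ and handling the two pieces by strong convergence and by monotone convergence respectively should close the argument cleanly.
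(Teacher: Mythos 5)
Your proposal is correct and follows exactly the route the paper intends: the paper proves Lemma \ref{25} only by the remark ``Proceeding similarly to Lemma \ref{21}'', and your transcription — the $\epsilon$-splitting of $\Gamma$ into $\Gamma_{m_0}\cup\Gamma^{m_0}$, the Cauchy-subsequence extraction via \eqref{27}, and the normalization/contradiction argument for the converse — is precisely the analogue of the paper's proof of Lemma \ref{21}. The one point you single out for care (passing to the limit in $\|\tilde u_j\|_{\qxp_1,\Gamma_j,\wghtw}$ via strong convergence plus monotone exhaustion) is handled correctly and is the same limit passage the paper performs at \eqref{20}.
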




\subsubsection{Proof of Proposition \ref{56}}

\noindent {\bf Step 1.} We begin by defining the trace operator. For every
\(j\in\mathbb N\), set
\[
A_j:=\hat U_{1,j}\cap\Gamma.
\]
Let
\[
\operatorname{Tr}_{1,j}:
W^{1,\qxp}
\left(
B(0,\hat r_{1,j})\cap\{x_n>0\}
\right)
\longrightarrow
L^{\qxp_1}
\left(
B(0,\hat r_{1,j})\cap\{x_n=0\}
\right)
\]
be the Euclidean trace operator. For
\(u\in W^{1,\qxp}(\Omega;\wghtv_0,\wghtv_1)\), define the local trace
\(T_j u\) on \(A_j\) by
\[
(T_j u)\circ\psi_{1,j}
:=
\operatorname{Tr}_{1,j}(u\circ\psi_{1,j}).
\]

This definition is well posed because the weights are bounded above
and below by positive constants on compact subsets of \(M\). Hence
\(u\circ\psi_{1,j}\) belongs to the corresponding Euclidean Sobolev
space.

If \(A_j\cap A_l\neq\varnothing\), the transition map
\[
\psi_{1,l}^{-1}\circ\psi_{1,j}
\]
is a smooth change of boundary coordinates. Since the Euclidean trace
commutes with smooth changes of coordinates preserving the flat
boundary, we have
\[
T_j u=T_l u
\quad\text{a.e. on }A_j\cap A_l.
\]

To define the global trace, set
\[
E_1:=A_1,
\qquad
E_j:=A_j\setminus\bigcup_{l=1}^{j-1}A_l,
\quad j\geq2.
\]
The sets \(E_j\) are measurable, pairwise disjoint, and
\[
\Gamma=\bigcup_{j\in\mathbb N}E_j.
\]

We define
\[
Tu:=\sum_{j=1}^{\infty}\chi_{E_j}T_j u.
\]

Thus, \(Tu\) is measurable. Moreover, the compatibility of the local
traces gives
\[
Tu=T_j u
\quad\text{a.e. on }A_j
\]
for every \(j\in\mathbb N\). Since the family
\(\{A_j\}_{j\in\mathbb N}\) is countable and covers \(\Gamma\), this
property also shows that \(Tu\) is unique up to equality almost
everywhere on \(\Gamma\).

In addition, if
\[
u\in C(\overline\Omega)\cap
W^{1,\qxp}(\Omega;\wghtv_0,\wghtv_1),
\]
then
\[
Tu=u|_\Gamma
\quad\text{a.e. on }\Gamma.
\]



\medskip

\noindent {\bf Step 2.} We now prove the continuity and compactness assertions for the trace operator.

For simplicity, we identify \(Tu\) with \(u\) on \(\Gamma\); that is, we write
\[
u:=Tu
\qquad\text{on }\Gamma.
\]

We have
$$
\|v\|_{\qxp_1,B(0,1)\cap\{x_n=0\}}
\leq c_1\|v\|_{1,\qxp,B(0,1)\cap\{x_n>0\}},
\quad\forall v\in W^{1,\qxp}(B(0,1)\cap\{x_n>0\}),
$$
where $B(0,1)\subset \mathbb{R}^n$ and $c_1 := c_1(\qxp , \qxp _1 , n , B(0,1))>0$.

For $u\in W^{1,\qxp}(\Omega;\wghtv_0,\wghtv_1)$, we obtain
\begin{align*}
\left(\int_{\hat U_{1,j}\cap\Gamma}|u|^{\qxp_1}\,\dsg\right)^{\frac1{\qxp_1}} = & \,  \displaystyle   \left(\int_{ B(0 , \hat r_{1,j} )\cap \{x_n =0\}  } | u \circ \psi _{1,j} |^{\qxp _1}  \sqrt{\det [ g_{\Gamma, \gij}]} \, \ds \right)^{\frac{1}{\qxp _1}}\\[3pt]
  \leq & \, \displaystyle   c _1 \bnorm  G_{\Gamma , 1,j} \bnorm ^{\frac{1}{\qxp _1}}  \hat{r}_{1,j}^{\frac{n-1}{\qxp _1 }} \left(\int_{ B(0 , 1 )\cap \{x_n >0\}} | u \circ \psi _{1,j}(\hat r _{1,j} y) |^{\qxp }   \dy \right)^{\frac{1}{\qxp}} \\[5pt]
\leq&\,\displaystyle c_2 \bnorm G_{\Gamma,1,j}\bnorm^{\frac1{\qxp_1}}
\bnorm G_{1,j}^{-1}\bnorm^{\frac1\qxp}\hat r_{1,j}^{\frac{n-1}{\qxp_1}}\\[3pt]
&\,\displaystyle\cdot\left(\hat r_{1,j}^{-n}\int_{\hat U_{1,j}\cap\Omega}|u|^{\qxp}\,\dvg
+\hat r_{1,j}^{-n+\qxp}\bnorm d\psi_{1,j}\bnorm^{\qxp}
\int_{\hat U_{1,j}\cap\Omega}|\nabla u|^{\qxp}\,\dvg\right)^{\frac1\qxp},
\end{align*}
where $c_2 := c_1 n^{1/2}$.

Proceeding as in the proof of Proposition \ref{33}, it follows from \ref{16} and \ref{139} that
\begin{align*}
\int_{\hat U_{1,j}\cap\Gamma}|u|^{\qxp_1}\,\dww
\leq&\,\displaystyle c_2 ^{\qxp_1}(\mathcal B_{\Gamma,\qxp,\qxp_1}^m)^{\qxp_1}
\left(\ck_{\qxp}\int_{\hat U_{1,j}\cap\Omega}|u|^{\qxp}\,\dwv_0
+\int_{\hat U_{1,j}\cap\Omega}|\nabla u|^{\qxp}\,\dwv_1\right)^{\frac{\qxp_1}\qxp}.
\end{align*}

From \ref{1},
\begin{align*}
\|u\|_{\qxp_1,\Gamma^{m},\wghtw}^{\qxp_1}
\leq&\,\displaystyle c_2 ^{\qxp_1}(\mathcal B_{\Gamma,\qxp,\qxp_1}^m)^{\qxp_1}
\left[\sum_j\left(\ck_{\qxp}\int_{\hat U_{1,j}\cap\Omega}|u|^{\qxp}\,\dwv_0
+\int_{\hat U_{1,j}\cap\Omega}|\nabla u|^{\qxp}\,\dwv_1\right)\right]^{\frac{\qxp_1}\qxp}\\[3pt]
\leq&\,\displaystyle c_2 ^{\qxp_1}(\mathcal B_{\Gamma,\qxp,\qxp_1}^m)^{\qxp_1}
[\rd_1(\ck_{\qxp}+1)]^{\frac{\qxp_1}\qxp}
\|u\|_{1,\qxp,\Omega,\wghtv_0,\wghtv_1}^{\qxp_1}.
\end{align*}
Thus,
\begin{equation}\label{35}
\|u\|_{\qxp_1,\Gamma^{m},\wghtw}
\leq c_2 \mathcal B_{\Gamma,\qxp,\qxp_1}^m[\rd_1(\ck_{\qxp}+1)]^{\frac1\qxp}
\|u\|_{1,\qxp,\Omega,\wghtv_0,\wghtv_1}.
\end{equation}

Recall that the weights are bounded above and below by positive constants on every compact subset of \(M\). Therefore, the local trace operator in \eqref{30} is continuous when $\qxp_1\leq (n-1)\qxp/(n-\qxp)$,
  whereas the local trace operator in \eqref{27} is compact when $\qxp_1<(n-1)\qxp/(n-\qxp)$. The proposition then follows from Lemma \ref{25} and \eqref{35}.

\begin{flushright}
$\square$
\end{flushright}

\section{Principal Eigenvalues with Indefinite Weights}\label{143}


Before proving Theorem \ref{59}, we prove the preliminary Propositions \ref{52} and \ref{53}.

\subsection{Elliptic Estimates} 

Similarly to Lemma \ref{6}, we have the following result.
\begin{lemma} \label{153}
Suppose $0<r<R\leq \hat r _{1,j}$ and fix $j\in \mathbb{N}$. There exists a smooth function $\zeta  : \hat U  _{1,j} \rightarrow [0,1]$ such that  
$$
\zeta  = 1 \text { in } \psi _{1,j} (B(0,r)), \quad   \spt \zeta  \subset \psi _{1,j} (B(0,R)),
$$ 
and 
$$
|\nabla \zeta  | _g \leq \cen _1(n)\bnorm d (\psi _{1,j}^{-1}) \bnorm  (R-r)^{-1} \quad \text { in } \quad \hat  U _{1,j}.
$$ 
\end{lemma}

For $h \in \mathbb{R}$ and $t\in (0,\hat r_{1,j})$, we define
\begin{gather*}
\ops [u, v] :=\int  _{\Omega}    g(  \nabla u , \nabla v ) \, \dwv _{1}  + \int  _{\Omega}  u v  \,   \dwv _{0} ,\\[5pt]
\Psi (h ,t) := \sqrt{\int _{ \mathscr{U}(h ,t) } u_{ h }  ^2  \, \dwv _2 + \int _{ \mathscr{U}_{\Gamma }  (h ,t)  } u_{ h } ^2   \, \dww },
\end{gather*}
where $u,v\in \womd  $,
\begin{gather*}
u_{h}  := (u-h )^+,\\[5pt]
\mathscr{U}(h ,t) := \left\{\pvr \in \psi _{1,j} (  B(0,t)\cap \{x_n >0\} ) \mid u(\pvr) > h  \right\} , \\[5pt]
 \mathscr{U}_{ \Gamma } (h ,t) := \left\{ \pvr \in \psi _{1,j} (  B(0,t)\cap \{x_n =0\} ) \mid u(\pvr) > h  \right\}.  
\end{gather*}

The proof of the next lemma will be given in the appendix.
\begin{lemma} \label{alcontb0}
Suppose $\kp _{1 , j} ^{2 } (\pvr) \wghtv _1 (\pvr )  \leq \ck _{2} \wghtv _2 (\pvr )$ for   a.e. $\pvr  \in \hat U _{1,j}$. Assume that   $\fc _2, f \in L^{q_2} (\hat U _{1,j} \cap \Omega  ; \wghtv _2)$,  and  $\fc _3 , f_1 \in L^{q_3} (\hat U _{1,j} \cap \Gamma ; \wghtw)$  for some  $q _2, q_3 > \max \{ \xv / (\xv -2) , \xw / (\xw -2) \}$. Suppose  
\begin{equation}\label{a168}
\begin{aligned}
 \ops [u,v_{h} ] + \int  _{\hat U _{1,j} \cap \Omega } \fc _2 u v_{h}   \, \dwv _{2}   + \int  _{ \hat U _{1,j} \cap \Gamma} \fc _3 u v_{ h } \dww  
 \leq \int  _{\hat U _{1,j} \cap \Omega }    f v_{ h } \, \dwv _{2}  + \int_{\hat U _{1,j} \cap \Gamma }  f_1 v _{ h } \, \dww ,  
\end{aligned}
\end{equation}
for all $h\geq 0 $, where $v _{ h }  = u_{ h }  \zeta ^2 $ and $\zeta$ is given in Lemma \ref{153}.

There exist  $\epsilon  = \epsilon ( q_2 , q_3 , \xv , \xw ) >0$ and  $\cen _2 = \cen _2 (  n , \mathcal C_{\xv} , \mathcal C_{\xw} , \xv , \xw , \qxp _2 , \qxp _3) >0$  such that if  
\begin{align*}
h_{2} > & \displaystyle \, h_{1}  \\[3pt]
  \geq & \displaystyle \,  \cen _2 \max \left\{ \widehat{\fc}_2 ^{\frac{\qxp _2 \xv }{\qxp _2 (\xv -2) - \xv}}   , \widehat{\fc}_3 ^{\frac{\qxp _3 \xw }{\qxp _3 (\xw -2) - \xw}} , 1\right\}  \max \left\{ \Vert u^+ \Vert _{2 , \hat U _{1,j} \cap \Omega  , \wghtv _2}  ,   \Vert  u^+ \Vert _{2 , \hat U _{1,j} \cap \Gamma , \wghtw }  \right\}, 
\end{align*}
then 
\begin{equation} \label{48}
\begin{aligned}
&\Psi  (h_{2},r) \\[3pt]
&\quad \ \leq \cen _3 \left[   \frac{  \hat r_{1,j}  - r_{1,j} }{(R-r)(h_{2}-h_{1} )^{\epsilon }}  + \frac{ \Vert f \Vert _{ q_2 ,  \hat U _{1,j} \cap \Omega  , \wghtv _2}  + \Vert f_1 \Vert _{q_3 , \hat U _{1,j} \cap \Gamma , \wghtw} + h_{2} (\widehat{\fc}_2 + \widehat{\fc}_3)^{1/2}}{(h_{2}-h_{1} )^{1+\epsilon }}\right] \Psi ^{1 + \epsilon  } (h_{1} ,R),
\end{aligned}
\end{equation}
where  $\cen _3= \cen _3  (  n , \mathcal C_{\xv} , \mathcal C_{\xw} ,  \xv , \xw , \qxp _2 , \qxp _3 , \ck _2) >0$, and
$$
\widehat{\fc}_2 :=\Vert \fc _2 \Vert _{q_2 , \hat U _{1,j} \cap \Omega  , \wghtv _2 } ,\qquad \widehat{\fc}_3 :=\Vert \fc _3 \Vert _{q_3 , \hat U _{1,j} \cap \Gamma  , \wghtw } .
$$

\end{lemma}


\begin{proposition} \label{52}
Suppose $\kp _{1 , j} ^{2 } (\pvr) \wghtv _1 (\pvr )  \leq \ck _{2} \wghtv _2 (\pvr )$ for   a.e. $\pvr  \in \hat U _{1,j}$.  Assume that   $\fc _2, f \in L^{q_2} (\hat U _{1,j} \cap \Omega  ; \wghtv _2)$,  and  $\fc _3 , f_1 \in L^{q_3} (\hat U _{1,j} \cap \Gamma ; \wghtw)$  for some  $q _2, q_3 > \max \{ \xv / (\xv -2) , \xw / (\xw -2) \}$. Suppose  
\begin{equation*}
\begin{aligned}
 \ops [u,v_{h} ] + \int  _{\Omega } \fc _2 u v_{h}   \,  \dwv _{2}   + \int  _{ \Gamma }\fc _3 u v_{ h } \, \dww  
 \leq \int  _{\Omega }    f v_{ h } \,  \dwv _{2}  + \int_{\Gamma } f_1 v _{ h } \, \dww ,  
\end{aligned}
\end{equation*}
for all $h\geq 0 $,   $v _{ h }  = u_{ h }  \zeta ^2 $ and $\zeta \in C^{\infty} _0 ( \hat U_{1,j} )$. Then
\begin{align*}
 \stackbin[ U _{1,j} \cap \Omega ]{}{\operatorname{ess}\sup} \,   u^{+} + & \stackbin[ U_{1,j} \cap \Gamma ]{}{\operatorname{ess}\sup} \, u^{+} \\[3pt]
 \leq & \displaystyle \,  C\left(1+ \max \left\{   \widehat{\fc}_2^{\frac{\qxp _2 \xv }{\qxp _2 (\xv -2) - \xv}}   , \widehat{\fc}_3 ^{\frac{\qxp _3 \xw }{\qxp _3 (\xw -2) - \xw}} \right\}  \right)\\[3pt]
& \displaystyle \, \cdot \left(\Vert u^+ \Vert _{2 , \hat U _{1,j} \cap \Omega   , \wghtv _2}  +   \Vert  u^+ \Vert _{2 , \hat U _{1,j} \cap \Gamma    , \wghtw }  \right) \\[3pt]
& \displaystyle \, + C\left( \Vert f \Vert _{ q_2 ,  \hat U _{1,j} \cap \Omega    , \wghtv _2}  + \Vert f_1 \Vert _{q_3 , \hat U _{1,j} \cap \Gamma  , \wghtw} \right)(1+\widehat{\fc}_2 + \widehat{\fc}_3)^{-1/2},
\end{align*}
where  $C=C (  n , \mathcal C_{\xv} , \mathcal C_{\xw} ,  \xv , \xw , \qxp _2 , \qxp _3 , \ck _2) >0$.
\end{proposition}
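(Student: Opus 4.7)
The strategy is De~Giorgi iteration, feeding the Caccioppoli-type decay of Lemma~\ref{alcontb0} into itself along a geometric sequence of levels and radii. The key point that unlocks this approach is that the hypothesis of Proposition~\ref{52} assumes the energy inequality for \emph{every} test function of the form $v_h = u_h\,\zeta^2$ with $\zeta \in C_0^\infty(\hat U_{1,j})$, so I may, in particular, choose at each step of the iteration the cutoff supplied by Lemma~\ref{153} adapted to a finer pair of radii.

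Fix $K>0$ to be determined later, and set
\begin{gather*}
h_k := K\bigl(1 - 2^{-k-1}\bigr), \qquad r_k := r_{1,j} + (\hat r_{1,j}-r_{1,j})\,2^{-k}, \qquad k=0,1,2,\ldots,\\
\Psi_k := \Psi(h_k, r_k),
\end{gather*}
so that $h_k\uparrow K$, $r_k\downarrow r_{1,j}$, and $h_{k+1}-h_k = K\,2^{-k-2}$, $r_k-r_{k+1} = (\hat r_{1,j}-r_{1,j})\,2^{-k-1}$. Writing
\[
\Theta := \max\Bigl\{\|\fc_2\|_{\qxp_2,\hat U_{1,j}\cap\Omega,\wghtv_2}^{\qxp_2 \xv/[\qxp_2(\xv-2)-\xv]},\; \|\fc_3\|_{\qxp_3,\hat U_{1,j}\cap\Gamma,\wghtw}^{\qxp_3 \xw/[\qxp_3(\xw-2)-\xw]},\;1\Bigr\}\max\{\|u^+\|_{2,\hat U_{1,j}\cap\Omega,\wghtv_2},\|u^+\|_{2,\hat U_{1,j}\cap\Gamma,\wghtw}\},
\]
I would first impose $K \geq 2\cen_2\Theta$, so that the threshold $h_k\geq \cen_2\Theta$ in Lemma~\ref{alcontb0} holds for every $k\geq 0$. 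Applying \eqref{48} at each step with $r=r_{k+1}<R=r_k$ and $h_1=h_k<h_2=h_{k+1}$, and requiring moreover $K \geq \|f\|_{q_2,\hat U_{1,j}\cap\Omega,\wghtv_2}+\|f_1\|_{q_3,\hat U_{1,j}\cap\Gamma,\wghtw}$, the bracketed term in \eqref{48} is bounded by $\cen_4\,2^{(k+2)(1+\epsilon)}/K^\epsilon$, producing the recursion
\[
\Psi_{k+1} \;\leq\; \frac{\cen_4\,b^k}{K^\epsilon}\,\Psi_k^{1+\epsilon}, \qquad b := 2^{1+\epsilon}.
\]

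Next I invoke the classical iteration lemma (Stampacchia, or Lemma~4.7 in Han--Lin): any sequence satisfying such a recursion with $\Psi_0 \leq c_0 K$, where $c_0 = c_0(\cen_4,\epsilon) = \cen_4^{-1/\epsilon}b^{-1/\epsilon^2}$, decays to $0$. Since
\[
\Psi_0 = \Psi(K/2,\hat r_{1,j}) \leq \|u^+\|_{2,\hat U_{1,j}\cap\Omega,\wghtv_2} + \|u^+\|_{2,\hat U_{1,j}\cap\Gamma,\wghtw},
\]
this smallness is enforced by enlarging $K$ with one more lower bound $K \geq c_0^{-1}(\|u^+\|_{2,\hat U_{1,j}\cap\Omega,\wghtv_2}+\|u^+\|_{2,\hat U_{1,j}\cap\Gamma,\wghtw})$. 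Adding the three lower bounds on $K$ and letting $k\to\infty$ gives $\Psi(K,r_{1,j})=0$, which means $u\leq K$ a.e.\ on $\psi_{1,j}(B(0,r_{1,j})\cap\{x_n\geq 0\})\supset (U_{1,j}\cap\Omega)\cup(U_{1,j}\cap\Gamma)$. The size of $K$ just described is exactly the right-hand side in the statement.

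The main obstacle is the bookkeeping of a single choice of $K$ that simultaneously (a) satisfies the Lemma~\ref{alcontb0} threshold for every $h_k$, (b) is large enough that the terms $\|f\|_{q_2}+\|f_1\|_{q_3}+h_{k+1}$ in the numerator of \eqref{48} can be absorbed into $b^k/K^\epsilon$ without circularity, and (c) is large enough that $\Psi_0 \leq c_0 K$; and then reconciling the resulting exponents $\alpha_2 = \qxp_2\xv/[\qxp_2(\xv-2)-\xv]$ and $\alpha_3 = \qxp_3\xw/[\qxp_3(\xw-2)-\xw]$ appearing in the prefactor with those in the statement. The rest is essentially a repackaging of the iteration scheme.
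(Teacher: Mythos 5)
Your proposal is correct and follows essentially the same route as the paper: a De Giorgi iteration of the level-set decay estimate \eqref{48} from Lemma \ref{alcontb0} along geometric sequences of levels and radii, with the final level $K$ chosen large enough to meet the threshold $\cen_2\Theta$, to absorb $\|f\|_{q_2}+\|f_1\|_{q_3}+h_{k+1}$ into the denominator, and to force $\Psi_0\le c_0K$ so the iteration collapses to zero. The only difference is cosmetic — you invoke the standard Stampacchia/Han--Lin iteration lemma where the paper verifies the induction $\Psi(h_i,R_i)\le\Psi(h_0,R_0)/\gamma^i$ by hand with $\gamma^\epsilon=2^{1+\epsilon}$ — and the resulting bounds coincide.
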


\begin{proof}  

We carry out the iteration.  Define for $i\in \mathbb{N} \cup \{0\}$,
$$
 h_{i}:=h_0 + h \left(1-\frac{1}{2^{i}}\right) \leq  h_0 +h \quad \text { and } \quad  R_{i}:= r _{1,j} +\frac{1}{2^{i}} (\hat r _{1,j} -  r _{1,j}).
$$
where  
$$
h_0 = \cen _2 \max \left\{ \widehat{\fc}_2 ^{\frac{\qxp _2 \xv }{\qxp _2 (\xv -2) - \xv}}   , \widehat{\fc}_3 ^{\frac{\qxp _3 \xw }{\qxp _3 (\xw -2) - \xw}} , 1\right\} \max \left\{ \Vert u^+ \Vert _{2 , \hat U _{1,j} \cap \Omega  , \wghtv _2}  ,   \Vert  u^+ \Vert _{2 , \hat U _{1,j} \cap \Gamma , \wghtw }   \right\},
$$
$\cen _2$ is given by Lemma \ref{alcontb0}, and $h>0$ will be determined later.

We have
$$
h_{i}-h_{i-1}=\frac{h}{2^{i}}\quad \text { and } \quad R_{i-1}-R_{i}=\frac{\hat r _{1,j} -  r _{1,j}}{2^{i}} .
$$

Hence, from \eqref{48}, 
\begin{align}
&\Psi  (h_{i}, R _i) \nonumber \\[3pt]
&\quad \ \leq \cen _3 \left[    2^i  + 2^i\frac{\Vert f \Vert _{ q_2 ,  \hat U _{1,j} \cap \Omega  , \wghtv _2}  + \Vert f_1 \Vert _{q_3 , \hat U _{1,j} \cap \Gamma , \wghtw} + (h _0 + h )(1+\widehat{\fc}_2 + \widehat{\fc}_3)^{1/2} }{h}\right]\frac{2^{\epsilon i}}{h^\epsilon} \Psi ^{1 + \epsilon  } (h_{i-1} , R_{i-1}) \nonumber \\[5pt]
&\quad \ \leq 2\cen _3 \left[ \Vert f \Vert _{ q_2 ,  \hat U _{1,j} \cap \Omega  , \wghtv _2}  + \Vert f_1 \Vert _{q_3 , \hat U _{1,j} \cap \Gamma , \wghtw} + (h _0 + h  )(1+\widehat{\fc}_2 + \widehat{\fc}_3)^{1/2}\right] \frac{2^{(1+\epsilon )i}}{h^{1+\epsilon}} \Psi ^{1 + \epsilon  } (h_{i-1} , R_{i-1}), \label{49}
\end{align}

Next we prove inductively for any $i\in \mathbb{N} \cup \{0\}$,
\begin{equation} \label{51}
\Psi  (h_{i} , R_{i}) \leq \frac{\Psi  (h_{0} , R_{0}) }{\gamma^{i}} \quad \text { for some } \quad \gamma>1,
\end{equation}
if $h$ is sufficiently large. It is true for $i=0$. Suppose it is true for $i-1$. We have
\begin{equation} \label{50}
\begin{aligned}
\Psi ^{1 + \epsilon  } (h_{i-1} , R_{i-1}) \leq & \, \displaystyle \left(  \frac{\Psi (h_0, R_0 )}{\gamma^{i-1}}\right) ^{1+\epsilon }\\[3pt]
=  & \, \displaystyle \frac{\Psi ^{\epsilon } (h_0, R_0)}{\gamma^{i \epsilon -(1+\epsilon )}} \cdot  \frac{\Psi (h_0, R_0)}{\gamma^{i}} .
\end{aligned}
\end{equation}

Then, by \eqref{49} and \eqref{50}, we obtain
\begin{align*}
\Psi (h_{i},  R_i) 
 \leq & \, \displaystyle  2\cen _3 \gamma^{1+\epsilon } \frac{  \Vert f \Vert _{ q_2 ,  \hat U _{1,j} \cap \Omega  , \wghtv _2}  + \Vert f_1 \Vert _{q_3 , \hat U _{1,j} \cap \Gamma , \wghtw} +( h _0 + h  )(1+\widehat{\fc}_2 + \widehat{\fc}_3)^{1/2} }{h^{1+\epsilon }} \\[3pt]
 & \, \displaystyle \cdot \Psi ^\epsilon (h_0, R_0)  \frac{2^{i (1+\epsilon )}}{\gamma^{i \epsilon }} \cdot \frac{\Psi ( h_0, R_0 )}{\gamma^{i}} .
\end{align*}

Choose $\gamma$ first such that $\gamma^{\epsilon }=2^{1+\epsilon }$. Note $\gamma>1$. Next, we need
$$
2\cen _3 \gamma^{1+\epsilon } \left(\frac{\Psi (h_0, R_0)}{h}\right)^{\epsilon }   \frac{  \Vert f \Vert _{ q_2 ,  \hat U _{1,j} \cap \Omega  , \wghtv _2}  + \Vert f_1 \Vert _{q_3 , \hat U _{1,j} \cap \Gamma , \wghtw} + (h _0 + h )(1+\widehat{\fc}_2 + \widehat{\fc}_3)^{1/2}  }{h} \leq 1 .
$$

Therefore, we choose
$$
h=C \left[   \left(\Vert f \Vert _{ q_2 ,  \hat U _{1,j} \cap \Omega  , \wghtv _2}  + \Vert f_1 \Vert _{q_3 , \hat U _{1,j} \cap \Gamma , \wghtw}\right)(1+\widehat{\fc}_2 + \widehat{\fc}_3)^{-1/2} + h _0   + \Psi ( h_0 , R_0) \right],
$$
for $C= C (  n , \mathcal C_{\xv} , \mathcal C_{\xw} ,  \xv , \xw , \qxp _2 , \qxp _3 , \ck _2) >0$ sufficiently large. This proves \eqref{51}.

Taking \( i \rightarrow \infty \) in \eqref{51}, we conclude
$$
\Psi (h_0 + h , r_{1,j} )=0.
$$

Hence, 
\begin{align*}
 \stackbin[ U_{1,j }\cap \Omega ]{}{\operatorname{ess}\sup} \,  u^{+} + & \stackbin[ U _{1,j} \cap \Gamma   ]{}{\operatorname{ess}\sup} \, u^{+} \\[3pt]
 \leq & \displaystyle \,  2  (C+1)\left[ \left( \Vert f \Vert _{ q_2 ,  \hat U _{1,j} \cap \Omega  , \wghtv _2}  + \Vert f_1 \Vert _{q_3 , \hat U _{1,j} \cap \Gamma , \wghtw} \right)(1+\widehat{\fc}_2 + \widehat{\fc}_3)^{-1/2}+ h _0   + \Psi ( h_0 , R_0) \right]\\[5pt]
 \leq & \displaystyle \,  2 (C+1)\Bigg[\left( \Vert f \Vert _{ q_2 ,  \hat U _{1,j} \cap \Omega  , \wghtv _2}  + \Vert f_1 \Vert _{q_3 , \hat U _{1,j} \cap \Gamma , \wghtw}\right)(1+\widehat{\fc}_2 + \widehat{\fc}_3)^{-1/2} \\[3pt]
& \displaystyle \, + \cen _2 \max \left\{ \widehat{\fc}_2 ^{\frac{\qxp _2 \xv }{\qxp _2 (\xv -2) - \xv}}   , \widehat{\fc}_3^{\frac{\qxp _3 \xw }{\qxp _3 (\xw -2) - \xw}} , 1 \right\}  \\[3pt]
& \displaystyle \, \cdot \max \left\{ \Vert u^+ \Vert _{2 , \hat U _{1,j} \cap \Omega  , \wghtv _2 }  ,   \Vert  u^+ \Vert _{2 , \hat U _{1,j} \cap \Gamma , \wghtw }   \right\}   \\[3pt]
& \displaystyle \, + \Vert u^+ \Vert _{2 , \hat U _{1,j} \cap \Omega  , \wghtv _2}  +   \Vert  u^+ \Vert _{2 , \hat U _{1,j} \cap \Gamma , \wghtw }   \Bigg].
\end{align*}
This finishes the proof. \end{proof}

Similarly, we have the following result.
\begin{proposition} \label{86}
Suppose $\kp _{1 , j} ^{2 } (\pvr) \wghtv _1 (\pvr )  \leq \ck _{2} \wghtv _2 (\pvr )$ for   a.e. $\pvr  \in \hat U _{1,j}$.    Assume that   $\fc _2, f \in L^{q_2} (\hat U _{1,j} \cap \Omega  ; \wghtv _2)$  for some  $q _2 >  \xv / (\xv -2)  $. Suppose  
\begin{equation*}
\begin{aligned}
 \ops [u,v_{h} ] + \int  _{\Omega } \fc _2 u v_{h}   \,  \dwv _{2}   
 \leq \int  _{\Omega }    f v_{ h } \, \dwv _{2}  ,  
\end{aligned}
\end{equation*}
for all $h\geq 0 $,   $v _{ h }  = u_{ h }  \zeta ^2 $ and $\zeta \in C^{\infty} _0 ( \hat U_{1,j} )$. Then
\begin{align*}
& \stackbin[ U _{1,j} \cap \Omega ]{}{\operatorname{ess}\sup}    u^{+}   \leq  C\left( 1 +  \widehat{\fc}_2 ^{\frac{\qxp _2 \xv }{\qxp _2 (\xv -2) - \xv}}      \right)\Vert u^+ \Vert _{2 , \hat U _{1,j} \cap \Omega   , \wghtv _2}  +  C  \Vert f \Vert _{ q_2 ,  \hat U _{1,j}  \cap \Omega , \wghtv _2}  (1+\widehat{\fc}_2 )^{-1/2} ,
\end{align*}
where  $C=C (  n , \mathcal C_{\xv} , \xv , \qxp _2 , \ck _2  ) >0$.
\end{proposition}

\begin{proposition} \label{53}
 Suppose $\kp _{0 , i} ^{2 } (\pvr) \wghtv _1 (\pvr )  \leq \ck _{2} \wghtv _2 (\pvr )$ for   a.e. $\pvr  \in \hat U _{0,i}$.    Assume that   $\fc _2, f \in L^{q_2} ( \hat U_{0,i}    ; \wghtv _2)$,   for some  $q _2 >  \xv / (\xv -2) $. Suppose  
\begin{equation*}
\begin{aligned}
 \ops [u,v_{h} ] + \int  _{\Omega } \fc _2 u v_{h}  \,  \dwv _{2}    
 \leq \int  _{\Omega}    f v_{ h } \, \dwv _{2}  ,
\end{aligned}
\end{equation*}
for all $h\geq 0 $,   $v _{ h }  = u_{ h }  \zeta ^2 $ and $\zeta \in C^{\infty} _0 ( \hat U_{0,i} )$. Then
\begin{align*}
 &\stackbin[ U_{0,i}]{}{\operatorname{ess}\sup}  u^{+} 
   \leq  C \left( 1+ \|\fc _2 \|_{\qxp _2 , \hat U_{0,i} , \wghtv _2} ^{\frac{\qxp _2 \xv }{\qxp _2 (\xv -2) - \xv}} \right) \Vert u^+ \Vert _{2 , \hat U _{0,i} , \wghtv _2}  +  C  \Vert f \Vert _{ q_2 ,  \hat U _{0,i} , \wghtv _2}   \left(1+\|\fc _2 \|_{\qxp _2 , \hat U_{0,i} , \wghtv _2}  \right)^{-1/2},
\end{align*}
where  $C=C (  n , \mathcal C_{\xv} ,    \xv ,  \qxp _2  , \ck _2)  >0$.
\end{proposition}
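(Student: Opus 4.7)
The plan is to mirror the proof of Proposition \ref{52}, dropping the boundary contributions since $\hat U_{0,i}$ is an interior chart (so $\hat U_{0,i}\cap \Gamma = \emptyset$); Proposition \ref{86} already illustrates this simplification in the boundary-chart setting, and Proposition \ref{53} is its interior counterpart.

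First I would establish the interior analogue of Lemma \ref{alcontb0}. Set
\[
\Psi(h,t) := \left(\int_{\mathscr{U}(h,t)} u_h^2 \, \dwv_2\right)^{1/2}, \qquad \mathscr{U}(h,t) := \{ p\in \psi_{0,i}(B(0,t)) \:|\: u(p) > h \},
\]
and take $\zeta$ a cutoff as in Lemma \ref{6}, supported in $\psi_{0,i}(B(0,R))$, equal to $1$ on $\psi_{0,i}(B(0,r))$, with $|\nabla \zeta|_g \lesssim \kp_{0,i}/(R-r)$. Plugging $v_h = u_h \zeta^2$ into the hypothesis and expanding $g(\nabla u, \nabla v_h) = \zeta^2 |\nabla u_h|_g^2 + 2 u_h \zeta\, g(\nabla u_h, \nabla \zeta)$, then using Cauchy--Schwarz to absorb the cross term into the gradient, yields an energy estimate whose right-hand side contains $\int u_h^2 |\nabla\zeta|_g^2 \dwv_1$, $\int |\fc_2|\, u\, u_h \zeta^2 \dwv_2$, and $\int |f| u_h \zeta^2 \dwv_2$. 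The hypothesis $\kp_{0,i}^2 \wghtv_1 \leq \ck_2 \wghtv_2$ converts the cutoff term into an $L^2(\wghtv_2)$ norm of $u_h$ on $\mathscr{U}(h_1,R)$.

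Next I would apply the continuous embedding $W^{1,2}(\Omega;\wghtv_0,\wghtv_1)\hookrightarrow L^{\xv}(\Omega;\wghtv_2)$ from Corollary \ref{57} to $u_h \zeta$, combined with Hölder on the $\fc_2$-term (exponents $q_2$ and $q_2/(q_2-1)$, then $\xv/2$ on the $u_h \zeta$ factor). This produces a factor $\|\fc_2\|_{q_2,\hat U_{0,i},\wghtv_2}\,|\mathscr{U}(h_1,R)|_{\wghtv_2}^{[q_2(\xv-2)-\xv]/(q_2\xv)}$ which, for
\[
h_0 := C_2 \max\!\left\{\|\fc_2\|_{q_2,\hat U_{0,i},\wghtv_2}^{q_2\xv/[q_2(\xv-2)-\xv]},1\right\}\|u^+\|_{2,\hat U_{0,i},\wghtv_2}
\]
and $h_1\ge h_0$, is at most $\tfrac{1}{2}$ by Chebyshev applied to $u_{h_1}$. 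The $f$-term is handled by Hölder and Chebyshev, producing powers $(h_2-h_1)^{-1}$ and $(h_2-h_1)^{-\epsilon}$ with $\epsilon = \epsilon(q_2,\xv)>0$. Collecting yields, for all $h_2 > h_1 \geq h_0$,
\[
\Psi(h_2,r) \leq \cen_3 \left[\frac{\hat r_{0,i}-r_{0,i}}{(R-r)(h_2-h_1)^\epsilon} + \frac{\|f\|_{q_2,\hat U_{0,i},\wghtv_2} + h_2}{(h_2-h_1)^{1+\epsilon}}\right]\Psi^{1+\epsilon}(h_1,R),
\]
with $\cen_3 = \cen_3(n,\mathcal{C}_{\xv},\xv,q_2,\ck_2)$.

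The De Giorgi iteration is then identical to the one in the proof of Proposition \ref{52}: with $h_i = h_0 + h(1-2^{-i})$ and $R_i = r_{0,i} + 2^{-i}(\hat r_{0,i}-r_{0,i})$, one proves inductively $\Psi(h_i,R_i) \leq \gamma^{-i}\Psi(h_0,R_0)$ for $\gamma^{\epsilon}=2^{1+\epsilon}$, provided $h = C(\|f\|_{q_2,\hat U_{0,i},\wghtv_2} + h_0 + \Psi(h_0,R_0))$ with $C$ large. Letting $i\to\infty$ gives $\Psi(h_0+h, r_{0,i})=0$, hence $\operatorname{ess\,sup}_{U_{0,i}} u^+ \leq h_0 + h$, which after unfolding the definition of $h_0$ is precisely the claimed bound.

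The main obstacle is the first step: the Hölder/Chebyshev bookkeeping needed to absorb the $\fc_2$-term into the gradient form, as it is this step that pins down the particular exponent $q_2\xv/[q_2(\xv-2)-\xv]$ in the statement. Once the interior Sobolev embedding from Corollary \ref{57} is in place, everything else follows the template already laid out for Propositions \ref{52} and \ref{86}.
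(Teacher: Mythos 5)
Your proposal is correct and follows essentially the same route as the paper: the paper proves Proposition \ref{52} in full (via Lemma \ref{alcontb0} and the De Giorgi-type iteration) and then asserts Propositions \ref{86} and \ref{53} "similarly," which is exactly the reduction you carry out — drop the boundary integrals since $\hat U_{0,i}\subset\Omega$, rerun the cutoff/H\"older/Chebyshev estimate using $\kp_{0,i}^2\wghtv_1\leq \ck_2\wghtv_2$ and the embedding of Corollary \ref{57}, and iterate with the same choice of $h_i$, $R_i$, $\gamma$, and $h$. No gaps.
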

 
 
\subsection{Proof of Theorem \ref{59}}

{\bf Step 1.} {For every $u\in\womd$, the functional
$$
v\longmapsto\int_\Omega\iwe uv\,\dwv_2
$$
is bounded on $\womd$. Indeed, by H\"older's inequality and \eqref{62},
$$
\left|\int_\Omega\iwe uv\,\dwv_2\right|
\leq
\|\iwe\|_{\xv/(\xv-2),\Omega,\wghtv_2}
\|u\|_{\xv,\Omega,\wghtv_2}
\|v\|_{\xv,\Omega,\wghtv_2}
\leq
\mathcal C_{\xv}^2
\|\iwe\|_{\xv/(\xv-2),\Omega,\wghtv_2}
\|u\|_{1,2,\Omega,\wghtv_0,\wghtv_1}
\|v\|_{1,2,\Omega,\wghtv_0,\wghtv_1}.
$$
Therefore, by the Lax--Milgram theorem, there exists a unique $Au\in\womd$ such that
$$
\ops[Au,v]
=
( \iwe u , v)_{\wghtv_2} 
\quad\forall v\in\womd.
$$
where 
$$
(w , z)_{\wghtv_2} := \int_\Omega wz\,\dwv_2.
$$

The operator $A:\womd\rightarrow\womd$ is bounded and symmetric with respect to the inner product $\ops[\cdot,\cdot]$. Indeed,
$$
\ops[Au,v]=(\iwe u,v)_{\wghtv_2}=(\iwe v,u)_{\wghtv_2}=\ops[u,Av]
\quad\forall u,v\in\womd.
$$
Since $A$ is bounded and defined on all of $\womd$, it is self-adjoint.}

\medskip

{\bf Step 2.} {\it $A$ is compact and has infinitely many positive and negative eigenvalues.}

\medskip

Suppose first that $\spt\iwe\subset\Omega_m$ for some $m\in\mathbb N$. Let $(u_i)$ be bounded in $\womd$. Then
$$
\begin{aligned}
\|Au_i-Au_j&\|_{1,2,\Omega,\wghtv_0,\wghtv_1}^2\\[3pt]
=&\,\ops[A(u_i-u_j),A(u_i-u_j)]\\[5pt]
\leq&\,\|\iwe\|_{L^\infty(\Omega_m)}
\|u_i-u_j\|_{\xv/(\xv-1),\Omega_m,\wghtv_2}
\|Au_i-Au_j\|_{\xv,\Omega_m,\wghtv_2}.
\end{aligned}
$$
From \eqref{62},
$$
\|Au_i-Au_j\|_{1,2,\Omega,\wghtv_0,\wghtv_1}
\leq
\mathcal C_{\xv}\|\iwe\|_{L^\infty(\Omega)}
\|u_i-u_j\|_{\xv/(\xv-1),\Omega_m,\wghtv_2}.
$$
Since the weights are bounded above and below by positive constants on compact subsets of $M$ and
$$
\frac{\xv}{\xv-1}<2,
$$
the embedding
$$
W^{1,2}(\Omega_m;\wghtv_0,\wghtv_1)
\longrightarrow
L^{\xv/(\xv-1)}(\Omega_m;\wghtv_2)
$$
is compact. Hence, $A$ is compact in this case.

In general, set
$$
\iwe_m(\pvr):=\left\{\begin{aligned}
&\iwe(\pvr) &&\text{if }\pvr\in\Omega_m,\\
&0 &&\text{if }\pvr\in\Omega\setminus\Omega_m,
\end{aligned}\right.
$$
and define $A_m:\womd\rightarrow\womd$ by
$$
(\iwe_m u,v)_{\wghtv_2}=\ops[A_mu,v]
\quad\forall u,v\in\womd.
$$
Then
$$
\begin{aligned}
\|A_mu-Au&\|_{1,2,\Omega,\wghtv_0,\wghtv_1}^2\\[3pt]
\leq&\,
\|\iwe_m-\iwe\|_{\xv/(\xv-2),\Omega,\wghtv_2}
\|u\|_{\xv,\Omega,\wghtv_2}
\|A_mu-Au\|_{\xv,\Omega,\wghtv_2}.
\end{aligned}
$$
From \eqref{62},
$$
\|A_mu-Au\|_{1,2,\Omega,\wghtv_0,\wghtv_1}
\leq
C\|\iwe_m-\iwe\|_{\xv/(\xv-2),\Omega,\wghtv_2}
\|u\|_{\xv,\Omega,\wghtv_2}.
$$
Thus, $A_m\rightarrow A$ in operator norm. Since every $A_m$ is compact, $A$ is compact.

We now prove that $A$ has infinitely many eigenvalues of each sign. Choose nonempty open sets
$$
O_+\Subset\{\pvr\in\Omega\mid\iwe(\pvr)>0\},
\qquad
O_-\Subset\{\pvr\in\Omega\mid\iwe(\pvr)<0\}.
$$

For every $k\in\mathbb N$, choose linearly independent functions
$$
\zeta _1^+,\ldots,\zeta _k^+\in C_0^\infty(O_+).
$$
For every nonzero $\zeta \in\operatorname{span}\{\zeta _1^+,\ldots,\zeta _k^+\}$,
$$
\ops[A\zeta ,\zeta ]
=
\int_\Omega\iwe\zeta ^2\,\dwv_2>0.
$$

Therefore, by \cite[Theorem 4.15, Section 4.5]{zbMATH06377290},
$$
\dim\operatorname{Ran}P_A((0,\infty))\geq k.
$$
Since $k$ is arbitrary, the range of the positive spectral
projection $P_A((0,\infty))$ is infinite-dimensional. The same argument in $O_-$ gives
$$
\dim\operatorname{Ran}P_A((-\infty,0))=\infty.
$$
By \cite[Theorem 6.6, Section 6.2]{zbMATH06377290}, the nonzero spectrum of the compact self-adjoint operator $A$ consists of eigenvalues of finite multiplicity whose only possible accumulation point is $0$.

Hence, \eqref{61} has infinitely many eigenvalues
$$
\cdots\leq\lambda_2^-\leq\lambda_1^-<0<\lambda_1^+\leq\lambda_2^+\leq\cdots,
$$
with $\lambda_k^-\rightarrow-\infty$ and $\lambda_k^+\rightarrow+\infty$.

Moreover, as a consequence of Propositions \ref{86} and \ref{53}, provided that the first inequality in \ref{47} is also satisfied, every eigenfunction belongs to \(L^\infty_{\loc}(\overline\Omega)\).

\medskip

{\bf Step 3.} {\it If \ref{47} holds, then $u\in L^\infty(\Omega)$. Moreover, if $u\in W^{1,2}_0(\overline\Omega;\wghtv_0,\wghtv_1)$, then}
\begin{gather}\label{88}
\lim_{m\rightarrow\infty}
\stackbin[\Omega^m]{}{\operatorname{ess}\sup}\,|u|=0.
\end{gather}

Let $\qxp_2>\xv/(\xv-2)$. Applying Propositions \ref{86} and \ref{53} to $u$ and $-u$, with $\fc_2=-\lambda\iwe$ and $f=0$, there exists
$$
C_1=C_1(n,\mathcal C_{\xv},\xv,\qxp_2,\ck_2)>0
$$
such that
\begin{gather*}
\stackbin[U_{1,j}\cap\Omega]{}{\operatorname{ess}\sup}\,|u|
\leq
C_1\left(
1+\|\lambda\iwe\|_{\qxp_2,\hat U_{1,j}\cap\Omega,\wghtv_2}^{\frac{\qxp_2\xv}{\qxp_2(\xv-2)-\xv}}
\right)
\|u\|_{2,\hat U_{1,j}\cap\Omega,\wghtv_2},\\[5pt]
\stackbin[U_{0,i}]{}{\operatorname{ess}\sup}\,|u|
\leq
C_1\left(
1+\|\lambda\iwe\|_{\qxp_2,\hat U_{0,i},\wghtv_2}^{\frac{\qxp_2\xv}{\qxp_2(\xv-2)-\xv}}
\right)
\|u\|_{2,\hat U_{0,i},\wghtv_2}
\end{gather*}
for all $i,j\in\mathbb N$.

From \ref{47},
$$
\|\lambda\iwe\|_{\qxp_2,\hat U_{k,i}\cap\Omega,\wghtv_2}
\leq
|\lambda|\|\iwe\|_{L^\infty(\Omega)}\ck_3^{1/\qxp_2}
$$
and
$$
\|u\|_{2,\hat U_{k,i}\cap\Omega,\wghtv_2}
\leq
\ck_3^{\frac{\xv-2}{2(\xv)}}
\|u\|_{\xv,\hat U_{k,i}\cap\Omega,\wghtv_2}
\leq
\ck_3^{\frac{\xv-2}{2(\xv)}}\mathcal C_{\xv}
\|u\|_{1,2,\Omega,\wghtv_0,\wghtv_1}.
$$
The preceding estimates are uniform in $i$ and $j$. Since the sets $U_{k,i}$ cover $\overline\Omega$, we obtain $u\in L^\infty(\Omega)$.

Let $\epsilon>0$. Since $u\in W^{1,2}_0(\overline\Omega;\wghtv_0,\wghtv_1)$, there exists $\zeta\in C_0^\infty(M)$ such that
$$
\|u-\zeta\|_{1,2,\Omega,\wghtv_0,\wghtv_1}<\epsilon.
$$
Let $m_0\in\mathbb N$ be such that $\spt\zeta\subset D_{m_0}$.

By the local finiteness of $\{\hat U_{k,i}\}$, only finitely many sets $\hat U_{k,i}$ meet $\overline{D_{m_0}}$. Since every corresponding $U_{k,i}$ is relatively compact, there exists $m_1\geq m_0$ such that, whenever $U_{k,i}\cap D^{m_1}\neq\varnothing$, one has
$$
\hat U_{k,i}\cap\spt\zeta=\varnothing.
$$
For these indices, $\zeta=0$ in $\hat U_{k,i}$ and, from \eqref{62},
$$
\|u\|_{\xv,\hat U_{k,i}\cap\Omega,\wghtv_2}
=
\|u-\zeta\|_{\xv,\hat U_{k,i}\cap\Omega,\wghtv_2}
\leq
\mathcal C_{\xv}\epsilon.
$$

\begin{align*}
 \stackbin[ U _{1,j} \cap \Omega ]{}{\operatorname{ess}\sup}  \,  |u| 
   \leq  & \displaystyle \, C_1 \left[  1+  \left(\|\lambda \iwe\|_{L^\infty (\Omega)} ^{\qxp _2} \ck _3 \right) ^{ \frac{\xv}{\qxp _2 (\xv -2) - \xv}} \right] \ck _3 ^{\frac{\xv -2}{\xv}}\Vert u \Vert _{\xv , \hat U_{1,j} \cap  \Omega   , \wghtv _2}  \\[3pt]
 \leq & \displaystyle \,  C_1 \left[ 1+  \left(\|\lambda \iwe\|_{L^\infty (\Omega)} ^{\qxp _2} \ck _3 \right) ^{ \frac{\xv }{\qxp _2 (\xv -2) - \xv}}    \right] \\[3pt]
& \displaystyle \, \cdot  \ck _3 ^{\frac{\xv -2}{\xv}} \left( \Vert u - \zeta \Vert _{\xv ,  \hat U_ {1,j}  \cap \Omega   , \wghtv _2} + \Vert  \zeta \Vert _{\xv ,  \hat U_ {1,j}  \cap \Omega   , \wghtv _2} \right)    \\[5pt]
  \leq & \displaystyle \,  C_1 \left[ 1+  \left(\|\lambda \iwe\|_{L^\infty (\Omega)} ^{\qxp _2} \ck _3 \right) ^{ \frac{\xv }{\qxp _2 (\xv -2) - \xv}}   \right] \mathcal{C} _{\xv}\ck _3 ^{\frac{\xv -2}{\xv}}  \epsilon 
\end{align*}
and 
\begin{align*}
&\stackbin[ U_{0,i}]{}{\operatorname{ess}\sup} \, |u|   \leq  C_1 \left[1+ \left(\|\lambda\iwe\|_{L^\infty (\Omega)} ^{\qxp _2} \ck _3 \right) ^{ \frac{\xv }{\qxp _2 (\xv -2) - \xv}}   \right]\mathcal{C} _{\xv}\ck _3 ^{\frac{\xv -2}{\xv}}       \epsilon.
\end{align*}

Since the right-hand side is independent of $(k,i)$ and the sets $U_{k,i}$ cover $\overline\Omega$, we obtain
$$
\stackbin[\Omega^m]{}{\operatorname{ess}\sup}\,|u|
\leq
C\epsilon
\quad\text{for every }m\geq m_1.
$$
As $\epsilon>0$ is arbitrary, \eqref{88} follows.

This concludes the proof of Theorem \ref{59}.


\appendix

\section{Proof of Lemma \ref{alcontb0}}
 Similarly to Lemma \ref{6}, we have the following result.
\begin{lemma} \label{44}
Suppose $0<r<R\leq \hat r _{1,j}$ and fix $j\in \mathbb{N}$. There exists a smooth function $\zeta  : \hat U  _{1,j} \rightarrow [0,1]$ such that  
$$
\zeta  = 1 \text {  in } \psi _{1,j} (B(0,r)),   \quad \spt \zeta  \subset \psi _{1,j} (B(0,R)),
$$
 and 
 $$
 |\nabla \zeta  | _g \leq \cen _1(n)\bnorm d (\psi _{1,j}^{-1}) \bnorm  (R-r)^{-1} \quad \text { in } \quad \hat  U _{1,j}.
 $$ 
\end{lemma}
 
We follow the proof in \cite[Theorem 4.1]{hanlin2011ellipticpartial}; see also \cite[Lemma A.2]{apaza2024yamabecorner}.
\begin{cl} \label{accontb1}
We have
\begin{equation*}
\begin{aligned}
& \int   _{\hat U _{1,j} \cap \Omega }    |\nabla (u_{h_{2}} \zeta  )|^2 \, \dwv _{1}  \\[3pt]
& \quad \ \leq C_1( n) \left[ \frac{(\hat r _{1,j} - r_{1,j} )^2 } {(R-r) ^{2}} \ck _2\int _{\mathscr{U}(h_{2},R)} u_{ h_{2} }^2 \, \dwv _{2}+ \int  _{\hat U _{1,j} \cap \Omega }    g (\nabla u , \nabla v_{ h_{2} }) \, \dwv _{1}  \right].
\end{aligned}
\end{equation*}
\end{cl}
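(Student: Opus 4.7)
My plan is to prove the claim by directly bounding $|\nabla(u_{h_2}\zeta)|^2$ via the product rule and then relating the ``good'' term $\zeta^2|\nabla u_{h_2}|^2$ to the quantity $g(\nabla u,\nabla v_{h_2})$ appearing on the right-hand side, while absorbing the weight $\wghtv_1|\nabla\zeta|^2$ into $\wghtv_2$ via the hypothesis.

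First, I would expand
\[
\nabla(u_{h_2}\zeta)=\zeta\,\nabla u_{h_2}+u_{h_2}\nabla\zeta,
\]
which gives the elementary estimate $|\nabla(u_{h_2}\zeta)|^2\le 2\zeta^2|\nabla u_{h_2}|^2+2u_{h_2}^2|\nabla\zeta|^2$. The task therefore reduces to controlling these two pieces separately. For the gradient piece, I would compute
\[
\nabla v_{h_2}=\nabla(u_{h_2}\zeta^2)=\zeta^2\nabla u_{h_2}+2\zeta u_{h_2}\nabla\zeta,
\]
and use that $\nabla u=\nabla u_{h_2}$ on $\{u>h_2\}$ to obtain the identity
\[
\zeta^2|\nabla u_{h_2}|^2=g(\nabla u,\nabla v_{h_2})-2\zeta u_{h_2}\,g(\nabla u_{h_2},\nabla\zeta).
\]
Applying Young's inequality to the last term (with weight $1/2$ on $\zeta^2|\nabla u_{h_2}|^2$) yields, after absorption,
\[
\zeta^2|\nabla u_{h_2}|^2\le 2\,g(\nabla u,\nabla v_{h_2})+4u_{h_2}^2|\nabla\zeta|^2.
\]

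Next, I would handle the weighted $|\nabla\zeta|^2$ term. By Lemma \ref{44}, $|\nabla\zeta|_g\le \cen_1(n)\|d(\psi_{1,j}^{-1})\|(R-r)^{-1}$ on $\hat U_{1,j}$. Since
\[
\kp_{1,j}\ge \|d(\psi_{1,j}^{-1})\|(\hat r_{1,j}-r_{1,j})^{-1}\quad\text{on }\hat U_{1,j},
\]
we get
\[
|\nabla\zeta|^2_g\le \cen_1^2\,\frac{(\hat r_{1,j}-r_{1,j})^2}{(R-r)^2}\,\kp_{1,j}^{2}.
\]
Multiplying by $\wghtv_1$ and invoking the hypothesis $\kp_{1,j}^{2}\wghtv_1\le \ck_2\wghtv_2$ converts this pointwise bound into $|\nabla\zeta|^2\dwv_1\le \cen_1^2\ck_2\frac{(\hat r_{1,j}-r_{1,j})^2}{(R-r)^2}\dwv_2$, so that
\[
\int_{\hat U_{1,j}\cap\Omega}u_{h_2}^2|\nabla\zeta|^2\dwv_1\le \cen_1^2\ck_2\,\frac{(\hat r_{1,j}-r_{1,j})^2}{(R-r)^2}\int_{\mathscr{U}(h_2,R)}u_{h_2}^2\dwv_2,
\]
using that $\spt\zeta\subset\psi_{1,j}(B(0,R))$ and $u_{h_2}$ is supported where $u>h_2$.

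Finally, I would combine the two pieces: integrating the inequality for $|\nabla(u_{h_2}\zeta)|^2$ over $\hat U_{1,j}\cap\Omega$, substituting the bound on $\zeta^2|\nabla u_{h_2}|^2$, and using the weighted estimate for $u_{h_2}^2|\nabla\zeta|^2$ yields the claim with a dimensional constant $C_1(n)$. No step is really delicate; the only point requiring care is making sure the constant $\ck_2$ appears correctly via the pointwise domination of $\wghtv_1$ by $\wghtv_2$ (through $\kp_{1,j}$), and keeping track of which cutoff region $\mathscr{U}(h_2,R)$ appears after restricting to $\spt\zeta$.
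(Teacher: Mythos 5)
Your proof is correct and follows essentially the same route as the paper: expand $g(\nabla u,\nabla v_{h_2})$ with $v_{h_2}=u_{h_2}\zeta^2$, use $\nabla u=\nabla u_{h_2}$ on $\{u>h_2\}$, absorb the cross term by Young's inequality, and convert $\wghtv_1|\nabla\zeta|^2$ into $\ck_2\frac{(\hat r_{1,j}-r_{1,j})^2}{(R-r)^2}\wghtv_2$ via Lemma \ref{44} and the hypothesis $\kp_{1,j}^2\wghtv_1\le\ck_2\wghtv_2$. The only cosmetic difference is that you carry out the absorption pointwise before integrating, whereas the paper does it at the level of integrals.
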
 
\begin{proof}
We have 
\begin{align*}
\int _{\hat U _{1,j} \cap \Omega } g(\nabla u , \nabla v_{h_{2}}) \, \dwv _{1} = & \displaystyle \, \int _{\hat U _{1,j} \cap \Omega } \left( \zeta  ^2 g(\nabla u , \nabla u_{h_{2}} ) + 2u_{ h_{2} } \zeta  g(\nabla u , \nabla \zeta   )  \right) \, \dwv _{1} \\[3pt]
 \geq & \displaystyle \,   \int _{\hat U _{1,j} \cap \Omega } \zeta  ^2  | \nabla u_{h_{2}} |^2 \, \dwv _{1}   - \int _{\hat U _{1,j} \cap \Omega } 2 \zeta   |\nabla u _{h_{2}}  | \cdot |\nabla \zeta   |  \cdot |u_{h_{2}} |  \, \dwv _{1} \\[5pt]
 \geq & \displaystyle \,  \frac{1}{2}\int _{\hat U _{1,j} \cap \Omega } \zeta  ^2  | \nabla u_{h_{2}} |^2 \, \dwv _{1}   - 2\int _{\hat U _{1,j} \cap \Omega }   |\nabla \zeta   |^2  u_{h_{2}}  ^2 \, \dwv _{1} .
\end{align*}
From Lemma \ref{44} and condition \ref{47}, we conclude the claim.\end{proof}

The inequality \eqref{a168} and  Claim \ref{accontb1} imply 
\begin{equation} \label{a169}
\begin{aligned}
\min \{1, & C_1\}  \ops [u_{h_{2}}  \zeta , u_{h_{2}}  \zeta] \\[3pt]
 \leq & \displaystyle \,  \int  _ {\hat U _{1,j} \cap \Omega }  |\nabla (u_{h_{2}}  \zeta  )|^2 \, \dwv _{1}   + C_1 \int  _{\hat U _{1,j} \cap \Omega }  u v_{h_{2}} \,  \dwv _{0}  \\[3pt]
 \leq  & \displaystyle \,    C_1 \left[ \frac{ (\hat r _{1,j} - r_{1,j})^2}{(R-r) ^{2}} \ck _{2} \int _{\mathscr{U}(h_{2},R)} u_{h_{2}}  ^2 \, { \dwv _{2}} \right. - \int  _{\hat U _{1,j} \cap \Omega } \fc  _2 u v_{h_{2}}   \, \dwv _{2}    \\[5pt]
& \displaystyle \,  \left.      - \int  _{ \hat U _{1,j} \cap \Gamma} \fc _3 u v_{h_{2}}  \, \dww   + \int  _{\hat U _{1,j} \cap \Omega }    f v_{h_{2}}  \, \dwv _{2} + \int_{\hat U _{1,j} \cap \Gamma }  f_1 v _{h_{2}} \, \dww  \right]  .
\end{aligned} 
\end{equation}

Next, we will estimate the terms on the right-hand side of \eqref{a169}.

\begin{cl}\label{accontb2}
The following estimates are valid:

\medskip

\noindent $(i)$ One has
\begin{align*}
-\int _{\hat U _{1,j} \cap \Gamma }  \fc _3  u & (u_{h_{2}}  \zeta ^2 ) \,  \dww    \\[3pt] 
 \leq & \displaystyle \,  2 \mathcal C ^2 _{\xw} \Vert \fc _3 \Vert _{q_3 ,\hat U _{1,j} \cap \Gamma , \wghtw}  \wghtw  ( \{u_{h_{2}}  \zeta \neq 0\} \cap \hat U _{1,j} \cap \Gamma ) ^ {\frac{\xw - 2}{\xw} - \frac{1}{q_3}}  \ops [ u_{h_{2}}  \zeta , u_{h_{2}}  \zeta ] \\[5pt]
& \displaystyle \, +  h_{2}^2 \Vert \fc _3 \Vert _{q_3 , \hat U _{1,j} \cap \Gamma , \wghtw } \wghtw  (\{ u_{h_{2}}  \zeta \neq 0\} \cap \hat U _{1,j} \cap \Gamma )^{1-\frac{1}{q_3}}. 
\end{align*}

\noindent  $(ii)$ For all  $\delta >0$,
\begin{align*}
&\int _{\hat U _{1,j} \cap \Gamma }  f_1 u_{h_{2}}  \zeta ^2  \dww\\[3pt]  
& \quad \ \leq \mathcal C_{\xw} \left( \delta ^{-1}  \wghtw  (\{ u_{h_{2}}  \zeta \neq 0\}\cap \hat U _{1,j} \cap \Gamma ) ^ {  \frac{2(\xw -1)}{\xw} - \frac{2}{q_3}} \Vert f_1 \Vert _{ q_3 , \hat U _{1,j} \cap \Gamma , \wghtw } ^2  + \delta \ops [ u_{h_{2}}  \zeta  , u_{h_{2}}  \zeta ] \right).
\end{align*} 

\medskip

\noindent  $(iii)$ One has
\begin{align*}
-\int _{\hat U _{1,j} \cap \Omega }  \fc _2 u & ( u_{h_{2}}  \zeta ^2 ) \, \dwv _{2} \\[3pt]
 \leq & \displaystyle \,  2 \mathcal C_{\xv} ^2\Vert \fc _2 \Vert _{q_2 , \hat U _{1,j} \cap \Omega  , \wghtv _2}  \wghtv _2( \{u_{h_{2}}  \zeta \neq 0\} ) ^ {\frac{\xv -2}{\xv} - \frac{1}{q_2}} \ops [ u_{h_{2}}  \zeta , u_{h_{2}}  \zeta] \\[5pt]
& \displaystyle \, +  h_{2}^2 \Vert \fc _2 \Vert _{q_2 ,  \hat U _{1,j} \cap \Omega   , \wghtv _2} \wghtv _2 ( \{u_{h_{2}}  \zeta \neq 0\} )^{1-\frac{1}{q_2}}.
\end{align*}

\noindent  $(iv)$ For all $\delta >0$,
\begin{align*}
\int _{\hat U _{1,j} \cap \Omega }  f u_{h_{2}}  & \zeta ^2 \, \dwv _{2} \\[3pt]
 \leq  & \displaystyle \, \mathcal C_{\xv} \left( \delta ^{-1} \wghtv _2 ( \{u_{h_{2}}  \zeta \neq 0\}) ^ {\frac{2(\xv -1)}{\xv} - \frac{2}{q _2}} \Vert f \Vert _{q_2 ,  \hat U _{1,j} \cap \Omega   , \wghtv _2 } ^2  + \delta  \ops [u_{h_{2}}  \zeta , u_{h_{2}}  \zeta ] \right).
\end{align*}

\end{cl}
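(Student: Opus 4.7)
The four estimates in Claim \ref{accontb2} will all be proved by the same recipe. Let $A$ denote the set $\{u_{h_{2}}\zeta\neq 0\}\cap\hat U_{1,j}\cap\Gamma$ in parts (i)--(ii) and $\{u_{h_{2}}\zeta\neq 0\}\cap\hat U_{1,j}\cap\Omega$ in parts (iii)--(iv). In every case the plan is: (a) reduce the left-hand integrand to nonnegative quantities involving $u_{h_{2}}$ and $h_{2}$; (b) apply H\"older's inequality with $L^{q_{i}}$ on the data/coefficient to extract the appropriate norm; (c) apply H\"older once more on $A$ to compare the resulting Lebesgue exponent with $\xw$ (respectively $\xv$), producing the stated power of $\wghtw(A)$ (respectively $\wghtv_{2}(A)$); (d) apply the continuous trace of Proposition \ref{56} (for (i)--(ii)) or the continuous embedding of Proposition \ref{33} (for (iii)--(iv)) to replace the remaining $L^{\xw}$ (respectively $L^{\xv}$) norm of $u_{h_{2}}\zeta$ by $\mathcal{C}_{\xw}\sqrt{\ops[u_{h_{2}}\zeta,u_{h_{2}}\zeta]}$ (respectively $\mathcal{C}_{\xv}\sqrt{\ops[u_{h_{2}}\zeta,u_{h_{2}}\zeta]}$); (e) if necessary finish with Young's inequality to split into a data part and an $\ops$ part.

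For (i) and (iii), on the support of $u_{h_{2}}$ one has $u=u_{h_{2}}+h_{2}$, so $u\,u_{h_{2}}=u_{h_{2}}^{2}+h_{2}u_{h_{2}}$. Young's inequality $h_{2}u_{h_{2}}\le u_{h_{2}}^{2}+\tfrac14 h_{2}^{2}$ leads to the pointwise bound $|{-}\fc_{3}\,u\,u_{h_{2}}\zeta^{2}|\le 2|\fc_{3}|(u_{h_{2}}\zeta)^{2}+|\fc_{3}|h_{2}^{2}\zeta^{2}$. For the first summand I integrate, apply H\"older with exponents $q_{3}$ and $q_{3}/(q_{3}-1)$ to peel off $\|\fc_{3}\|_{q_{3},A,\wghtw}$, then apply a second H\"older on $A$ comparing $L^{2q_{3}/(q_{3}-1)}(A,\wghtw)$ with $L^{\xw}(A,\wghtw)$. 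The hypothesis $q_{3}>\xw/(\xw-2)$ is exactly the statement that $2q_{3}/(q_{3}-1)<\xw$, so this second H\"older produces the factor $\wghtw(A)^{(\xw-2)/\xw-1/q_{3}}$; an appeal to Proposition \ref{56} then yields the $\mathcal{C}_{\xw}^{2}\ops[u_{h_{2}}\zeta,u_{h_{2}}\zeta]$ factor. For the second summand I use $\zeta^{2}\le 1$ on $A$ and a single H\"older to reach $h_{2}^{2}\|\fc_{3}\|_{q_{3},A,\wghtw}\wghtw(A)^{1-1/q_{3}}$. Item (iii) is the same argument with $\wghtw\to\wghtv_{2}$, $\Gamma\to\Omega$, $\xw\to\xv$, $q_{3}\to q_{2}$, $\fc_{3}\to\fc_{2}$, invoking Proposition \ref{33} in place of Proposition \ref{56}.

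For (ii) and (iv), I use $\zeta^{2}\le\zeta$ (since $0\le\zeta\le1$) and H\"older twice as above to get
\[
\int_{A}|f_{1}|\,u_{h_{2}}\zeta\,\dww\le \|f_{1}\|_{q_{3},A,\wghtw}\,\wghtw(A)^{(\xw-1)/\xw-1/q_{3}}\,\|u_{h_{2}}\zeta\|_{\xw,A,\wghtw},
\]
and the trace of Proposition \ref{56} replaces the last factor by $\mathcal{C}_{\xw}\sqrt{\ops[u_{h_{2}}\zeta,u_{h_{2}}\zeta]}$. Young's inequality $xy\le\delta^{-1}x^{2}+\delta y^{2}$ (after a harmless rescaling of $\delta$ that absorbs a power of $\mathcal{C}_{\xw}$) splits the product into the claimed $\delta^{-1}$- and $\delta$-parts, with measure exponent $2(\xw-1)/\xw-2/q_{3}$. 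Item (iv) is identical under the substitutions above, using Proposition \ref{33}.

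There is no substantive analytic obstacle; the argument is an organized cascade of H\"older, Young, and the embeddings of Corollary \ref{57}. The only delicate point is the bookkeeping of exponents: the hypothesis $q_{3}>\xw/(\xw-2)$ (respectively $q_{2}>\xv/(\xv-2)$) is exploited precisely where one needs $2q_{3}/(q_{3}-1)<\xw$ (respectively $2q_{2}/(q_{2}-1)<\xv$) so that the second H\"older on $A$ produces a nonnegative power of the measure of $A$, matching the exponents displayed in the claim.
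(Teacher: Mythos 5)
Your proposal is correct and follows essentially the same route as the paper: the decomposition $u\,u_{h_2}=u_{h_2}^2+h_2u_{h_2}$ with Young's inequality for (i)/(iii), H\"older to peel off the $L^{q_i}$ norm and produce the measure factor (your two successive applications of H\"older are equivalent to the paper's single three-exponent application, and the exponent check $q_3>\xw/(\xw-2)\iff 2q_3/(q_3-1)<\xw$ is the right one), followed by the trace/embedding inequalities \eqref{63} and \eqref{62} of Corollary \ref{57} to convert $\|u_{h_2}\zeta\|_{\xw}$ and $\|u_{h_2}\zeta\|_{\xv}$ into $\mathcal{C}\sqrt{\ops[u_{h_2}\zeta,u_{h_2}\zeta]}$, and Young's inequality with parameter $\delta$ for (ii)/(iv). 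No gaps.
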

\begin{proof} We start with the following inequalities, which will be used later. Hölder's inequality and \eqref{63} imply
\begin{equation} \label{aecontb1}
\begin{aligned}
 \int _{\hat U _{1,j} \cap \Gamma }  (u_{h_{2}}  \zeta )^2 \, \dww \leq & \displaystyle \,  \wghtw  (\{ u_{h_{2}}  \zeta \neq 0 \} \cap \hat U _{1,j} \cap \Gamma )^{1-\frac{2}{\xw}}\left( \int _{\hat U _{1,j} \cap \Gamma }  | u_{h_{2}}  \zeta |^{\xw} \dww \right)^{\frac{2}{\xw}} \\[3pt]
  \leq  & \displaystyle \,  \mathcal{C} ^2 _{\xw} \wghtw  ( \{ u_{h_{2}}  \zeta \neq 0 \}\cap \hat U _{1,j} \cap \Gamma )^{1-\frac{2}{\xw}} \ops [ u_{h_{2}}  \zeta , u_{h_{2}}  \zeta].
 \end{aligned}
\end{equation}
On the other hand, by  \eqref{62},
\begin{equation} \label{aecontb2}
\begin{aligned}
 \int _{\hat U _{1,j} \cap \Omega } (u_{h_{2}}  \zeta )^2 \, \dwv _{2} \leq & \displaystyle \,  \wghtv _{2}  (\{ u_{h_{2}}  \zeta \neq 0 \}  )^{1-\frac{2}{\xv}}\left( \int _{\hat U _{1,j} \cap \Omega } | u_{h_{2}}  \zeta |^{\xv} \, \dwv _{2} \right)^{\frac{2}{\xv}} \\[3pt]
  \leq & \displaystyle \,  \mathcal{C} ^2 _{\xv} \wghtv _{2}  ( \{ u_{h_{2}}  \zeta \neq 0 \} )^{1-\frac{2}{\xv}} \ops [ u_{h_{2}}  \zeta , u_{h_{2}}  \zeta].
 \end{aligned}
\end{equation}

\medskip

\noindent $(i)$ \, Since $2/\xw + 1/q_3 <1$,
\begin{align*}
-\int _{\hat U _{1,j} \cap \Gamma }  \fc _3  u (u_{h_{2}}  \zeta ^2 )  \,  \dww   = & \displaystyle \,  -\int _{\{ u_{h_{2}}  \zeta  \neq 0\} \cap \hat U _{1,j} \cap \Gamma }  \fc _3 \left(  u_{h_{2}}  ^2 + h_{2}  u_{h_{2}}  \right) \zeta ^2   \dww   \\[3pt]
 \leq & \displaystyle \, 2 \int _{\{ u_{h_{2}}  \zeta  \neq 0 \} \cap \hat U _{1,j} \cap \Gamma }  |\fc _3|  u_{h_{2}}  ^2 \zeta ^2  \, \dww   + h_{2}^2    \int _{\{ u_{h_{2}}  \zeta  \neq 0 \} \cap \hat U _{1,j} \cap \Gamma }     |\fc _3|\zeta ^2  \, \dww   \\[3pt]
 \leq & \displaystyle \,  2\left( \int _{\hat U _{1,j} \cap \Gamma }  |\fc _3| ^{q_3}  \dww  \right) ^{\frac{1}{q_3}} \left( \int _{\{ u_{h_{2}}  \zeta \neq 0 \} \cap \hat U _{1,j} \cap \Gamma }   | u_{h_{2}}  \zeta |^ {\xw }  \dww    \right) ^{\frac{2}{\xw}}  \\[3pt]
& \displaystyle \, \cdot \wghtw  ( \{ u_{h_{2}}  \zeta \neq 0 \} \cap \hat U _{1,j} \cap \Gamma  ) ^ {1-\frac{2}{\xw} - \frac{1}{q_3}} \\[3pt]
& \displaystyle \,  + h_{2}^2 \Vert \fc _3 \Vert _{q_3 , \hat U _{1,j} \cap \Gamma , \wghtw}  \wghtw  (  \{ u_{h_{2}}  \zeta \neq 0 \} \cap \hat U _{1,j} \cap \Gamma )^{1-\frac{1}{q_3}}\\[7pt]
 \leq   & \displaystyle \, 2 \mathcal C ^2 _{\xw} \Vert \fc _3 \Vert _{ q_3 , \hat U _{1,j} \cap \Gamma , \wghtw}  \wghtw  (  \{ u_{h_{2}}  \zeta \neq 0 \} \cap \hat U _{1,j} \cap \Gamma ) ^ {\frac{\xw -2}{\xw} - \frac{1}{q_3}} \ops [ u_{h_{2}}  \zeta , u_{h_{2}}  \zeta] \\[3pt]
& \displaystyle \,  + h_{2}^2 \Vert \fc _3 \Vert _{q_3 , \hat U _{1,j} \cap \Gamma , \wghtw  } \wghtw  (  \{ u_{h_{2}}  \zeta \neq 0 \} \cap \hat U _{1,j} \cap \Gamma )^{1-\frac{1}{q_3}}, 
\end{align*}
by \eqref{aecontb1}. This concludes the proof of $(i)$.\\

\noindent $(ii)$ \ Since      $1/q_3 + 1/\xw < 1$  and  $0\leq \zeta \leq 1$,
\begin{align*}
& \int _{\hat U _{1,j} \cap \Gamma }  f_1 u_{h_{2}}  \zeta ^2  \dww    \\[3pt]
& \quad \  \leq   \left( \int _{\hat U _{1,j} \cap \Gamma }  |f_1| ^{q_3}  \dww   \right) ^{\frac{1}{q_3}} \left( \int _{\hat U _{1,j} \cap \Gamma }   | u_{h_{2}}  \zeta |^ {\xw}  \dww    \right) ^{ \frac{1}{\xw} }  \wghtw  (\{ u_{h_{2}}  \zeta \neq 0\}\cap \hat U _{1,j} \cap \Gamma ) ^ {1- \frac{1}{q_3} - \frac{1}{\xw}}.
\end{align*}
By \eqref{aecontb1},
\begin{align*}
&\int _{\hat U _{1,j} \cap \Gamma }  |f_1| u_{h_{2}}  \zeta ^2   \dww    \\[3pt]
& \quad \ \leq  \mathcal C_{\xw} \Vert f_1 \Vert _{q_3 , \hat U _{1,j} \cap \Gamma , \wghtw }  \sqrt{\ops [ u_{h_{2}}  \zeta , u_{h_{2}}  \zeta ]}   \wghtw  (\{ u_{h_{2}}  \zeta \neq 0\}\cap \hat U _{1,j} \cap \Gamma ) ^ {\frac{\xw -1 }{\xw} - \frac{1}{q_3}} \\[5pt]
&\quad \ \leq  \mathcal C_{\xw} \delta ^{-1} \wghtw  ( \{ u_{h_{2}}  \zeta \neq 0\}\cap \hat U _{1,j} \cap \Gamma ) ^ {\frac{2(\xw -1)}{\xw} - \frac{2}{q_3}}  \Vert f_1 \Vert _{q_3 , \hat U _{1,j} \cap \Gamma , \wghtw} ^2  +  \mathcal C_{\xw} \delta \ops [ u_{h_{2}}  \zeta , u_{h_{2}}  \zeta ].
\end{align*}
This proves $(ii)$. The proofs of $(iii)$ and $(iv)$ follow the same lines as in $(i)$ and $(ii)$. \end{proof}

Let us observe that  
\begin{gather*}
1- \frac{ 1}{q_2} \leq \frac{2(\xv -1) }{ \xv} - \frac{2}{q_2}, \quad   1- \frac{1}{q_3} \leq \frac{2(\xw -1) }{ \xw} - \frac{2}{q_3},\\[3pt]  
\{u_{h_{2}}  \zeta  \neq 0\} \subset \mathscr{U}(h_{2},R),  \quad \{u_{h_{2}}  \zeta  \neq 0\} \cap \hat U _{1,j} \cap \Gamma \subset \mathscr{U} _{\Gamma }  (h_{2},R),\\[3pt]
  \wghtv _{2} (\mathscr{U} (h_{2},R) ) \leq h_{2}^{-2} \int _{\mathscr{U} (h_{2},R)} (u^+)^2 \, \dwv _{2} , \quad   \text {  and  } \quad \wghtw ( \mathscr{U} _{\Gamma }  (h_{2},R) ) \leq h_{2}^{-2} \int _{\mathscr{U} _{\Gamma } (h_{2},R)} (u^+)^2  \, \dww  .
  \end{gather*} 

Hence, by \eqref{a169} and Claim \ref{accontb2}, there exists a constant $N = N (  n , \mathcal C_{\xv} , \mathcal C_{\xw} , \xv , \xw , \qxp _2 , \qxp _3 ) >0 $ such that if  
$$
h_{2}\geq N \max \left\{   \|\fc _2 \|_{\qxp _2 , \hat U_{1,j} \cap \Omega , \wghtv _2} ^{\frac{\qxp _2 \xv }{\qxp _2 (\xv -2) - \xv}}   , \|\fc _3 \|_{\qxp _3 , \Gamma \cap \hat U_{1,j}, \wghtw} ^{\frac{\qxp _3 \xw }{\qxp _3 (\xw -2) - \xw}} , 1\right\}  \max \left\{ \Vert u^+ \Vert _{2 , \hat U _{1,j} \cap \Omega  , \wghtv _2 }  ,   \Vert  u^+ \Vert _{2 , \hat U _{1,j} \cap \Gamma , \wghtw }   \right\},
$$
then
\begin{equation}
\wghtv _2(\mathscr{U}(h_{2},R))<1 , \quad   \wghtw  ( \mathscr{U} _{\Gamma }  (h_{2},R)) \ <1 \label{aecontb5}
\end{equation}
and
\begin{equation} \label{aecontb4}
\begin{split} 
C_{2} \ops [ u_{h_{2}}  \zeta , &  u_{h_{2}}  \zeta] \\[3pt]
 \leq  & \displaystyle \,  \frac{ (\hat r _{1,j} - r_{1,j})^2}{(R-r) ^{2}} \ck _{2} \int _{\mathscr{U}(h_{2},R)} u_{h_{2}}  ^2   \, \dwv _{2}  +    \left(  \Vert f \Vert _{q_2 , \hat U _{1,j} \cap \Omega  , \wghtv _2 } ^2 + h_{2}^2 \widehat{\fc}_2 \right)  \wghtv _2 ( \{u_{h_{2}}  \zeta \neq 0\})^{1-\frac{1}{q_2}}  \\[5pt]
 & \displaystyle \, + \left(  \Vert f_1 \Vert _{q_3 ,\hat U _{1,j} \cap \Gamma , \wghtw} ^2 + h_{2}^2 \widehat{\fc}_3 \right) \wghtw (\{ u_{h_{2}}  \zeta \neq 0\}\cap \hat U _{1,j} \cap \Gamma)^{1-\frac{1}{q_3}} ,
\end{split}
\end{equation}
where $C_{2}=C_{2}(  n , \mathcal C_{\xv} , \mathcal C_{\xw}  )>0$, and
$$
\widehat{\fc}_2 :=\Vert \fc _2 \Vert _{q_2 , \hat U _{1,j} \cap \Omega  , \wghtv _2 } ,\qquad \widehat{\fc}_3 :=\Vert \fc _3 \Vert _{q_3 , \hat U _{1,j} \cap \Gamma  , \wghtw } .
$$

From   \eqref{aecontb2} and   \eqref{aecontb4}, we have
\begin{equation}\label{aecontb6}
\begin{split} 
C_{3} \int   _{\hat U _{1,j} \cap \Omega } &    (u_{h_{2}}  \zeta )^2 \, \dwv _{2}   \\[3pt]
 \leq & \displaystyle \,    \frac{ (\hat r _{1,j} - r_{1,j})^2}{(R-r) ^{2}} \wghtv _2 ( \{u_{h_{2}}  \zeta \neq 0\})^{1- \frac{2}{\xv}}  \left( \int _{\mathscr{U}(h_{2},R)} u_{h_{2}}  ^2 \, \dwv _{2}    + \int _{\mathscr{U} _{\Gamma } (h_{2},R)}  u_{h_{2}}  ^2 \, \dww   \right)   \\[3pt]
 & \displaystyle \, + \left[  \Vert f \Vert _{q_2 , \hat U _{1,j} \cap \Omega  , \wghtv _2 } + \Vert f_1 \Vert _{q_3 , \hat U _{1,j} \cap \Gamma , \wghtw } + h_{2} (\widehat{\fc}_2+\widehat{\fc}_3 )^{1/2} \right]^2 \\[3pt]
& \displaystyle \,  \cdot   \left( \wghtv _2 ( \{u_{h_{2}}  \zeta \neq 0\})^{1- \frac{2}{\xv} + 1 -\frac{1}{q_2} }  + \wghtv _2 ( \{u_{h_{2}}  \zeta \neq 0\})^{1- \frac{2}{\xv}}  \wghtw  ( \{ u_{h_{2}}  \zeta \neq 0\}\cap \hat U _{1,j} \cap \Gamma )^{1-\frac{1}{q_3}} \right) .
\end{split}
\end{equation}
and, by \eqref{aecontb1},
\begin{equation}\label{aecontb7}
\begin{split}
C_{4} \int _{\hat U _{1,j} \cap \Gamma } & ( u_{h_{2}}  \zeta)^2   \dww   \\[3pt]
 \leq & \displaystyle \,    \frac{ (\hat r _{1,j} - r_{1,j})^2}{(R-r) ^{2}}    \wghtw  ( \{ u_{h_{2}}  \zeta \neq 0\} \cap \hat U _{1,j} \cap \Gamma )^{1 - \frac{2}{\xw}} \left( \int _{\mathscr{U} (h_{2}, R )} u_{h_{2}}  ^2  \, \dwv _{2}   + \int _{\mathscr{U}_{\Gamma } (h_{2},R)} u_{h_{2}}  ^2  \dww   \right)       \\[3pt]
& \displaystyle \, + \left[  \Vert f \Vert _{q _2 , \hat U _{1,j} \cap \Omega   , \wghtv _2} +  \Vert f_1 \Vert _{q_3 , \hat U _{1,j} \cap \Gamma , \wghtw}  + h_{2} (\widehat{\fc}_2+\widehat{\fc}_3 )^{1/2}\right]^2 \\[3pt]
& \displaystyle \, \cdot  \left(\wghtw  (\{ u_{h_{2}}  \zeta \neq 0\} \cap \hat U _{1,j} \cap \Gamma )^{1 - \frac{2}{\xw}}  \wghtv _{2}  (\{u_{h_{2}}  \zeta \neq 0\} )^{1 -\frac{1}{q_2}}   \right. \\[3pt]
& \displaystyle \, \left. +   \wghtw  ( \{ u_{h_{2}}  \zeta \neq 0\} \cap \hat U _{1,j} \cap \Gamma )^{1 - \frac{2}{\xw}+ 1 -\frac{1}{q_3}} \right) ,
 \end{split}
\end{equation}
where $C_i = C_i (  n , \mathcal C_{\xv} , \mathcal C_{\xw} , \ck _{2} )>0$, $i=3, 4$.

Set
$$
\epsilon  := \min \left\{ 1-\frac{1}{q_2} -\frac{2}{\xw} , 1-\frac{1}{q_3} -\frac{2}{\xv} , 1-\frac{1}{q_2} -\frac{2}{\xv} , 1-\frac{1}{q_3} -\frac{2}{\xw}  \right\}>0
$$

 By Young's inequality,
\begin{equation}\label{a206}
\begin{aligned}
& \wghtv _2 (\{u_{h_{2}}  \zeta \neq 0\} )^{1-\frac{2}{\xv}}  \wghtw (\{ u_{h_{2}}  \zeta \neq 0\}  \cap \hat U _{1,j} \cap \Gamma ) ^{1 -\frac{1}{q_3}}\\[3pt] 
& \quad \ \leq  \frac{1}{C_{5 }}  \wghtv _2 ( \{u_{h_{2}}  \zeta \neq 0\} )^{1+\epsilon } +  \frac{C_{5 } -1}{C_{5 }} \wghtw ( \{ u_{h_{2}}  \zeta \neq 0\} \cap \hat U _{1,j} \cap \Gamma ) ^{\left(1-\frac{1}{q_3}\right)\frac{C_{5 }}{C_{5 }-1} }     ,
\end{aligned}
\end{equation}
\begin{equation} \label{a254}
\begin{aligned}
& \wghtv _2 ( \{u_{h_{2}}  \zeta \neq 0\} )^{1 -\frac{1}{q_2}}  \wghtw ( \{ u_{h_{2}}  \zeta \neq 0\} \cap \hat U _{1,j} \cap \Gamma )^{1- \frac{2}{\xw}}\\[3pt] 
& \quad \leq \frac{C_{6 }-1}{C_{6 }} \wghtv _2 ( \{ u_{h_{2}}  \zeta \neq 0\}  ) ^{\left(1-\frac{1}{q_2}\right)\frac{C_{6 }}{C_{6 }-1} }  + \frac{1}{C_{6 }}  \wghtw ( \{u_{h_{2}}  \zeta \neq 0\} \cap \hat U _{1,j} \cap \Gamma ) ^{1+\epsilon },
\end{aligned}
\end{equation}
where $C_{5 } (1- 2/\xv ) =1+\epsilon $ and $C_{6 }(1-2/\xw)= 1+\epsilon$. Observe also that 
$$
\left(1-\frac{1}{q_3}\right)\frac{C_{5 }}{C_{5 }-1} \geq 1+\epsilon  \ \text { and  } \ \left(1-\frac{1}{q_2}\right)\frac{C_{6 }}{C_{6 }-1}\geq 1+\epsilon .
$$

The inequalities \eqref{aecontb5}, \eqref{aecontb6} - \eqref{a254} imply
\begin{equation}\label{a151}
\begin{split}
C_{7} \Psi ^2 (h_{2} , r ) \leq & \, \displaystyle   \frac{ (\hat{r}_{1, j}-r_{1, j})^2}{(R-r)^2} \left(   \wghtv _2 (\{u_{h_{2}}  \zeta \neq 0 \})  +\wghtw ( \{u_{h_{2}}  \zeta \neq 0 \}\cap \hat U _{1,j} \cap \Gamma) \right)^{\epsilon }\Psi ( h_{2}, R ) ^2 \\[3pt]
& \, \displaystyle + \left[  \Vert f \Vert _{q_2 , \hat U _{1,j} \cap \Omega  , \wghtv _2 } +  \Vert f_1 \Vert _{q_3 , \hat U _{1,j} \cap \Gamma , \wghtw}  + h_{2} (\widehat{\fc}_2+\widehat{\fc}_3 )^{1/2} \right]^2 \\[3pt]
& \, \displaystyle \cdot \left( \wghtv _2 (\{u_{h_{2}}  \zeta \neq 0 \})  +\wghtw ( \{u_{h_{2}}  \zeta \neq 0 \}\cap \hat U _{1,j} \cap \Gamma) \right)^{1+\epsilon },
\end{split}
\end{equation}
where $C_{7}= C_{7} (  n , \mathcal C_{\xv} , \mathcal C_{\xw}  , \xv , \xw , \qxp _2 , \qxp _3 , \ck _{2}) >0$. Consider \eqref{a151} and the following claim, which is proved as in \cite{hanlin2011ellipticpartial}:
\begin{cl}
If $h_{2}>h_{1} $, then
\begin{gather*}
\int _{\mathscr{U}(h_{2},R)} u_{h_{2}}  ^2 \,  \dwv _2  \leq \int _{\mathscr{U} (h_{1} ,R)} u_{h_{1}}   ^2 \, \dwv _2  , \quad \int _{\mathscr{U} _{\Gamma } (h_{2},R)}  u_{h_{2}}  ^2   \dww  \leq \int _{\mathscr{U} _{\Gamma }  (h_{1} ,R)} u_{h_{1}}   ^2  \dww  ,\\[3pt]
 \wghtv _2 ( \{u_{h_{2}}  \zeta  \neq 0\} ) \leq  \frac{1}{(h_{2}-h_{1} )^2}\int _{\mathscr{U}(h_{1} ,R)} u_{h_{1}}   ^2\, \dwv _2, \\[3pt]
  \wghtw ( \{ u_{h_{2}}  \zeta \neq 0\} \cap \hat U _{1,j} \cap \Gamma ) \leq  \frac{1}{(h_{2}-h_{1} )^2}\int _{\mathscr{U}_{\Gamma } (h_{1} ,R)} u_{h_{1}}   ^2  \dww .
\end{gather*}
\end{cl}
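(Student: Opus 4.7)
The plan is to derive all four inequalities from two elementary tools: the monotonicity of the truncation $u_h := (u-h)^+$ in the parameter $h$, together with Chebyshev's inequality on superlevel sets.

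I would begin by recording the basic set-theoretic and pointwise facts that follow from $h_2 > h_1$. Whenever $u(p) > h_2$ one also has $u(p) > h_1$, so $\mathscr{U}(h_2, R) \subset \mathscr{U}(h_1, R)$ and likewise $\mathscr{U}_{\Gamma}(h_2, R) \subset \mathscr{U}_{\Gamma}(h_1, R)$. On these smaller sets one has $0 \leq u_{h_2} = u - h_2 \leq u - h_1 = u_{h_1}$. Combining the pointwise bound on $\mathscr{U}(h_2, R)$ with the extension of the domain of integration via the inclusion gives
\[
\int_{\mathscr{U}(h_2,R)} u_{h_2}^2 \dwv _2 \;\leq\; \int_{\mathscr{U}(h_2,R)} u_{h_1}^2 \dwv _2 \;\leq\; \int_{\mathscr{U}(h_1,R)} u_{h_1}^2 \dwv _2,
\]
which is the first inequality; the second is identical after replacing $\mathscr{U}$ by $\mathscr{U}_{\Gamma}$ and $\dwv _2$ by $\dww$.

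For the remaining two bounds I would use the cutoff support condition from Lemma \ref{44}, namely $\spt \zeta \subset \psi_{1,j}(B(0,R))$, which yields
\[
\{u_{h_2}\zeta \neq 0\} \;\subset\; \mathscr{U}(h_2, R), \qquad \{u_{h_2}\zeta \neq 0\} \cap \hat U_{1,j} \cap \Gamma \;\subset\; \mathscr{U}_{\Gamma}(h_2, R).
\]
On $\mathscr{U}(h_2, R)$ we have $u > h_2$, hence $u_{h_1} = u - h_1 > h_2 - h_1 > 0$. Chebyshev's inequality applied to $u_{h_1}^2$ on $\mathscr{U}(h_1, R)$ therefore gives
\[
\wghtv _2\bigl(\mathscr{U}(h_2, R)\bigr) \;\leq\; \wghtv _2\bigl(\{u_{h_1} > h_2 - h_1\} \cap \mathscr{U}(h_1, R)\bigr) \;\leq\; \frac{1}{(h_2 - h_1)^2} \int_{\mathscr{U}(h_1, R)} u_{h_1}^2 \dwv _2,
\]
and combining with the inclusion above yields the third inequality. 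The fourth follows by the same Chebyshev argument with $\dww$ in place of $\dwv _2$ and $\mathscr{U}_{\Gamma}$ in place of $\mathscr{U}$.

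There is no substantive obstacle here: this is a measure-theoretic bookkeeping lemma recording how truncations and their superlevel sets depend monotonically on the cutoff parameter. The only points requiring a moment of attention are the verification that the support condition on $\zeta$ indeed places $\{u_{h_2}\zeta \neq 0\}$ inside $\mathscr{U}(h_2, R)$, and the choice to apply Chebyshev on the restricted domain $\mathscr{U}(h_1, R)$ rather than the whole manifold---this is exactly what matches the right-hand side to the form that is consumed later in the iteration bound \eqref{a151}.
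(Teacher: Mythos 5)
Your proof is correct, and it is exactly the standard truncation/Chebyshev argument that the paper invokes by citing Han--Lin (the paper gives no details beyond that reference). The monotonicity $u_{h_2}\leq u_{h_1}$ together with the inclusions $\mathscr{U}(h_2,R)\subset\mathscr{U}(h_1,R)$ and $\{u_{h_2}\zeta\neq 0\}\subset\mathscr{U}(h_2,R)\subset\{u_{h_1}>h_2-h_1\}$ is precisely the intended route, so there is nothing to add.
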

\noindent Therefore,
\begin{align*}
& C_7 \Psi ^2 (h_{2},r) \\
& \quad \ \leq  \left\{ \frac{ (\hat r _{1,j} - r_{1,j})^2}{(R-r) ^{2}(h_{2}-h_{1} )^{2\epsilon }}  + \frac{  [  \Vert f \Vert _{q_2  ,  \hat U _{1,j} \cap \Omega  , \wghtv _2}  +  \Vert f_1 \Vert _{q_3 , \hat U _{1,j} \cap \Gamma , \wghtw} + h_{2} (\widehat{\fc}_2+\widehat{\fc}_3 )^{1/2}   ]^2}{(h_{2}-h_{1} )^{2(1+\epsilon )}}\right\} \Psi ^{2(1 + \epsilon  )} (h_{1} ,R).
\end{align*}
This proves Lemma \ref{alcontb0}.


\vspace{1cm}


\noindent {\bf Funding:}This work was supported by the Instituto Nacional de Ciência e Tecnologia de Matemática (INCTMat) through CNPq Grant No. 170245/2023-3, and by CNPq-Brazil under Grants Nos. 153232/2024-2 and 150680/2025-2.

\noindent {\bf Data Availability:} No data were used for the research described in the article.

\noindent {\bf Declarations}

\noindent {\bf Conflict of interest:} The author declares no conflict of interest.



 
 \bibliographystyle{abbrv}

    \bibliography{ref}

\end{document}